\newtheorem{rmk}{Remark}[section]
\newtheorem{lemma}{Lemma}[section]
\newtheorem{theorem}{Theorem}[section]
\newtheorem{definition}{Definition}[section]
\newtheorem{pro}{Problem}[section]
\newtheorem{ass}{Assumption}[section]
\renewcommand{\d}{\mathrm{d}}
\newcommand{\eps}{\varepsilon}
\newcommand{\la}{\langle}
\newcommand{\ra}{\rangle}
\begin{document}

\title{  Optimal Control of Forward-Backward Stochastic Differential System of Jump Diffusion with Observation Noise: Stochastic
Maximum Principle
 \thanks{This work was supported by the Natural Science Foundation of Zhejiang Province
for Distinguished Young Scholar  (No.LR15A010001),  and the National Natural
Science Foundation of China (No.11471079, 11301177) }}

\date{}

   \author{ Qingxin Meng\thanks{Corresponding author.   E-mail: mqx@zjhu.edu.cn}
\hspace{1cm}
\\\small{Department of Mathematics, Huzhou University, Zhejiang 313000, China}}

\maketitle
\begin{abstract}

This paper is concerned with the partial information optimal control problem  of 
wa controlled forward-backward stochastic differential equation of jump diffusion with correlated noises between the system and the observation.
 For this type of partial information optimal control problem, Necessary and sufficient optimality conditions, in the form of Pontryagin maximum principle, for the partial information optimal control
are established using  a unified way.
Moreover, our admissible control process $u(\cdot)$ satisfies the following
 integrable condition condition:
 \begin{equation*} \label{eq:1.16}
  \mathbb E\bigg[\int_0^T|u(t)|^4
dt\bigg]<\infty,
\end{equation*}
\end{abstract}

\textbf{Keywords}Poisson Random Martingale Measure, Maximum Principle, Forward-Backward
Stochastic Differential Equation, Girsanov¡¯s Theorem,  Partial Information

\maketitle

\section{ Introduction}
In recent years,  the control 
problem of forward-backward 
stochastic system with 
observation and their applications in mathematical finance have been studied extensively, see for example, \cite{ML2016, Shi2010,SZ2013, WW2009, WWX2015,WWX,WWZ,xiao2011,XZZ2016, ZRW}
 
In  the above references, one of  the most important results was established by  Wang, Wu and Xiong \cite{WWX},  where  a partial information optimal control problem derived by forward-backward stochastic systems driven by Brownian motion with correlated noises between the system and the observation. Utilizing a direct method, an approximation method, and a Malliavin derivative method, they established three versions of maximum principle (i.e., necessary condition) for optimal control in the sense of  weak solutions,
where in order to  establishing 
the corresponding the 
variation formula for the 
cost functional,
the following  $L^8-$ bound
is imposed on the admissible controls:
\begin{equation} \label{eq:1.17}
  \sup_{t\in [0, T]}\mathbb E\bigg[|u(t)|^8
dt\bigg]<\infty.
\end{equation}
 In some reference(see, for example \cite{SZ2013,xiao2011, ZRW}),
the following  $L^2-$ bound
is imposed on the admissible controls:
\begin{equation} \label{eq:1.17}
  \sup_{t\in [0,T]}\mathbb E\bigg[|u(t)|^2
dt\bigg]<\infty.
\end{equation}
In fact, $L^2-$ bound on the admissible 
 does not seem enough to obtain the
 corresponding variation formula for the cost functional because
 the stochastic process $\rho(\cdot)$  (see
 \eqref{eq:8}) as an additional 
 state process  is  multiplied by performance indicators as follows:
 \begin{equation*}\label{eq:15}
\begin{split}
J(u(\cdot ))=& \mathbb E\displaystyle\bigg[%
\int_{0}^{T}\rho^u(t)l(t,x(t),y(t),z_1(t),z_2(t),
\Lambda(t,\cdot), u(t))dt +\rho^u(T)\Phi(x(T))
+\gamma(y(0))\bigg].
\end{split}
\end{equation*}
where $l$ and $\Phi$
are  quadratic 
growth with respect to 
the state process $x(\cdot).$
Therefore, in order to obtaining
the well-definedness of the 
cost functional and the corresponding 
variation formula, we should at least put forward the following conditions
on the admissible controls:
\begin{equation} \label{eq:1.1118}
  \mathbb E\bigg[\int_0^T|u(t)|^2
dt\bigg]^{{1+\delta}}<\infty,~for~some~ \delta >0.
\end{equation}
or
\begin{equation} \label{eq:1.18}
  \sup_{t\in [0, T]}\mathbb E\big[|u(t)|\big]^{{2+\delta}}<\infty,~for~some~ \delta >0.
\end{equation}
 In 2017, Meng, Shi and Tang \cite{MST} revisits the partial information optimal control problem  considered by Wang, Wu and
Xiong \cite{WWX},
where they improve
the $L^p-$ bounds on the control  from $L^8-$ bound
\eqref{eq:1.17}
to the following  $L^4-$ bound
 \begin{equation} \label{eq:1.16}
  \mathbb E\bigg[\bigg(\int_0^T|u(t)|^2
dt\bigg)^{2}\bigg]<\infty.
\end{equation}

 This paper is concerned with the partial information optimal control problem
where the  system is governed by a controlled forward-backward stochastic differential equation of jump diffusion system
 with correlated noises between the system and the observation.
 The main contribution of this paper is to establish  necessary and
sufficient stochastic maximum  for an optimal control in a unified way. The
main idea is to get directly a variation formula in terms of the
Hamiltonian and the associated adjoint system which is a linear
forward-backward
stochastic differential equation  and neither the
variational systems nor the corresponding Taylor type expansions of
the state process and the cost functional will be considered.
Moreover, different from \eqref{eq:1.16},  the following  $L^4-$ bound
is imposed on our admissible controls:
 \begin{equation} \label{eq:1.1116}
  \mathbb E\bigg[\int_0^T|u(t)|^4
dt\bigg]<\infty,
\end{equation}
because the BDG-inequality for
 the integration of 
Poisson random martingale is different 
from  that for
 the integration of Brownian Motion.
 
 The rest of this paper is organized as follows. The assumptions, notations and
 the formulation of our partial
   observable optimal control problem are given in Section 2. Sections 3 and 4 are devoted to prove our main results. Finally, Section 5 concludes the paper and outlines some possible future developments.

\section{  Assumptions and Statement  of Problem}

In this section, we introduce
some basic notations  which will be
used in this paper. Let ${\cal T} : = [0, T]$ denote a fixed time interval of finite length, i.e., $T < \infty$.
We consider a complete probability space $( \Omega, {\mathscr F}, {\mathbb P} )$,
on which all randomness is defined. We equip $( \Omega, {\mathscr F}, {\mathbb P} )$
with a right-continuous, ${\mathbb P}$-complete filtration ${\mathbb F} : = \{ {\mathscr F}_t | t \in {\cal T} \}$,
to be specified below. Furthermore, we assume that ${\mathscr F} _{T} = {\mathscr F}$. Denote by
${\mathbb E} [\cdot]$ the expectation taken with respect to ${\mathbb P}$.  By $\mathscr{P}$ we
denote the
predictable $\sigma$ field on $\Omega\times {\cal T} $ associated with $\mathbb F$
and by $\mathscr
B(\Lambda)$ the Borel $\sigma$-algebra of any topological space
$\Lambda.$ Let
  $\{W(t), t \in {\cal T}\}$ and $\{Y(t),t \in {\cal T}\}$
be two independent one-dimensional
standard Brownian motions.
Let $\{\mathscr{F}^W_t\}_{t\in {\cal T}}$ and $
\{\mathscr{F}^Y_t\}_{t\in {\cal T}}$ be $\mathbb P$-completed natural
filtration generated by $\{W(t), t\in {\cal T}\}$ and $\{Y(t), t\in {\cal T}\},$ respectively.  By $\mathscr{P}^Y$ we
denote the
predictable $\sigma$ field on $\Omega\times {\cal T} $ associated with $
\{\mathscr{F}^Y_t\}_{t\in {\cal T}}.$
Let $(E,\mathscr B (E), \nu
)$ be a measurable space with $\nu(E)<\infty $ and $\eta: \Omega\times D_\eta \longrightarrow E$ be an
$\mathscr F_t$-adapted stationary
Poisson point process with characteristic measure $\nu$, where
 $D_\eta$  is a countable subset of $(0, \infty)$. Then the counting measure induced by $\eta$ is
$$
\mu((0,t]\times A):=\#\{s\in D_\eta; s\leq t, \eta(s)\in A\},~~~for~~~ t>0, A\in \mathscr B (E).
$$
And $\tilde{\mu}(dt, d\theta):=\mu(dt, d\theta)-dt\nu(d\theta)$ is a compensated Poisson random martingale  measure which
is assumed to be independent of Brownian motion
$\{W(t), 0\leq t\leq T\}$ and  $\{Y(t), 0\leq t\leq T\}$. Assume $\{\mathscr{F}_t\}_{0\leq t\leq T}$ is the  $\mathbb P$-completed natural
 filtration generated by $\{W(t),
0\leq t\leq T\}, \{Y(t),
0\leq t\leq T\} $ and $\{\iint_{(0,t] \times A }\tilde{\mu}(d\theta,ds), 0\leq t\leq T, A\in \mathscr B (E) \}.$
 Let $H$ be a Euclidean space. The inner product in $H$ is denoted by
$\langle\cdot, \cdot\rangle,$ and the norm in $ E$ is denoted by $|\cdot|.$
Let $A^{\top }$ denote the
transpose of the matrix or vector $A.$
For a
function $\psi:\mathbb R^n\longrightarrow \mathbb R,$ denote by
$\psi_x$ its gradient. If $\psi: \mathbb R^n\longrightarrow \mathbb R^k$ (with
$k\geq 2),$ then $\psi_x=(\frac{\partial \phi_i}{\partial x_j})$ is
the corresponding $k\times n$-Jacobian matrix. In the follows, $K$ represents a generic constant, which
can be different from line to line.
Next we introduce some spaces of random variable and stochastic
 processes.
For any $\alpha, \beta\in [1,\infty),$ let
 \begin{enumerate}
\item[$\bullet$]
$M_{\mathscr{F}}^\beta(0,T;H)$: the space of all $E$-valued and $\mathscr P-$measurable processes $f=\{f(t,\omega),\ (t,\omega)\in {\cal T}
\times\Omega\}$ satisfying
$
\|f\|_{M_{\mathscr{F}}^\beta(0,T;H)}\triangleq{\left (\mathbb E\bigg[\displaystyle%
\int_0^T|f(t)|^ \beta dt\bigg]\right)^{\frac{1}{\beta}}}<\infty, $
 \item[$\bullet$] $S_{\mathscr{F}}^\beta (0,T;H):$ the space of all $H$-valued and ${
\mathscr{F}}_t$-adapted c\`{a}dl\`{a}g processes $f=\{f(t,\omega),\
(t,\omega)\in {\cal T}\times\Omega\}$ satisfying $
\|f\|_{S_{\mathscr{F}}^\beta(0,T;H)}\triangleq{\left (\mathbb E\bigg[\displaystyle\sup_{t\in {\cal T}}|f(t)|^\beta \bigg]\right)^{\frac
{1}{\beta}}}<+\infty,$
 \item[$\bullet$]$L^\beta (\Omega,{\mathscr{F}},P;H):$ the space of all
$H$-valued random variables $\xi$ on $(\Omega,{\mathscr{F}},P)$
satisfying $ \|\xi\|_{L^\beta(\Omega,{\mathscr{F}},P;H)}\triangleq
\sqrt{\mathbb E|\xi|^\beta}<\infty,$
 \item[$\bullet$] $M_{\mathscr{F}}^\beta(\Omega;L^\alpha (0,T; H)):$ the space of all $L^\alpha (0,T; H)$-valued and $\mathscr P-$measurable processes $f=\{f(t,\omega),\ (t,\omega)\in[0,T]%
\times\Omega\}$ satisfying $
\|f\|_{M_{\mathscr{F}}^\beta(\Omega;L^\alpha (0,T; H))}\triangleq{\left\{\mathbb E\bigg[\left(\displaystyle
\int_0^T|f(t)|^\alpha
dt\right)^{\frac{\beta}{\alpha}}
\bigg]\right\}^{\frac{1}{\beta}}}<\infty, $

\item[$\bullet$] ${M}^{\nu,\beta}( E; H):$ the space of all H-valued measurable
  functions $\Lambda=\{\Lambda(e), e \in E\}$ defined on the measure space $(E, \mathscr B(E); \nu)$ satisfying
$$\|\Lambda\|_{{ M}^{\nu,\beta}( E; H)}\triangleq\bigg\{{\displaystyle
\int_E|\Lambda(e)|^\beta\nu(de)}\bigg\}^
{\frac{1}{\beta}}<~\infty,$$

\item[$\bullet$] ${M}^{\nu,\beta}( [0,T]\times E; H):$ the space of all H-valued measurable
  functions $\Lambda=\{\Lambda(t,e), (t,e) \in {\cal T}\times E\}$ defined on the measure space $({\cal T}\times E, \mathscr B({\cal T})\times \mathscr B(E); (dt)\times \nu)$ satisfying
$$\|\Lambda\|_{{M}^{\nu,\beta}( [0,T]\times E; H)}\triangleq
\bigg\{{\displaystyle
\int_0^T\int_E|\Lambda(t,e)|^\beta\nu(de)dt}
\bigg\}^{\frac{1}{\beta}}<~\infty,$$
\item $\bullet$ ${M}_{\mathscr{F}}^{\nu,\beta}{([0,T]\times  E; H)}:$ the  space of all $H$-valued
and $\mathscr P\times \mathscr B(E)-$measurable processes $\Lambda=\{\Lambda(t,\omega,e),\
(t,\omega,e)\in[0,T]\times\Omega\times E\}$ satisfying
$$\|\Lambda\|_{{M}_{\mathscr F}^{\nu,\beta}( [0,T]\times E; H)}\triangleq \bigg\{{\mathbb E\bigg[\int_0^T\int_{E}\displaystyle|\Lambda(t,e)|
^\beta dt}\bigg]\bigg\}^{\frac{1}{\beta}}<~\infty,$$
\item[$\bullet$] $M_{\mathscr{F}}^\beta(\Omega;
    {M}^{\nu,\alpha}{([0,T]\times  E; H)}):$ the space of all $M_
    {\mathscr{F}}^{\nu,\alpha}{([0,T]\times  E; H)}$-valued and$\mathscr P\times \mathscr B(E)-$measurable processes $\Lambda=\{\Lambda(t,\omega,e),\ (t,\omega,e)\in[0,T]%
\times\Omega\times E$ satisfying $
\|\Lambda\|_{\alpha,\beta}\triangleq{\left\{\mathbb E\bigg[\left(\displaystyle
\int_0^T\int_{E}|\Lambda(t,e)|^\alpha
\nu(de)dt\right)^{\frac{\beta}
{\alpha}}\bigg]\right\}^{\frac{1}{\beta}}}<\infty. $
\end{enumerate}

Now we consider  the following
forward-backward stochastic
control system of Jump diffusion
\begin{eqnarray} \label{eq:1}
\left\{
\begin{aligned}
dx(t)=&b(t,x(t),u(t))dt+ \sigma_1(t, x(t), u(t)) dW(t)+\sigma _2(t, x(t), u(t)) dW^u(t)
+\int_{E}g (t, x(t-), u(t),e) \tilde\mu(dt,de),
\\
dy(t)=&f(t,x(t),y(t),z_1(t),z_2(t),\Lambda(t,\cdot),u(t))dt+ z_1(t) dW(t)+  z_2(t) dW^u(t)+
\int_{E}\Lambda(t,e)\tilde\mu(dt,de),\\
x(0)=&x,\\
y(T)=&\phi(x(T))
\end{aligned}
\right.
\end{eqnarray}
with one observation  processes $Y(\cdot)$ governed  by the following SDE

\begin{eqnarray}\label{eq:2}
\left\{
\begin{aligned}
dY(t)=&h(t,x(t),u(t))dt+ dW^u(t),\\
Y(t)=& 0.
\end{aligned}
\right.
\end{eqnarray}
 Here $b: {\cal T} \times \Omega \times {\mathbb R}^n
 \times U  \rightarrow {\mathbb R}^n$,
  $\sigma_1:
{\cal T} \times \Omega \times {\mathbb R}^n
 \times U \rightarrow {\mathbb R}^n$, $\sigma_2: {\cal T} \times \Omega \times {\mathbb R}^n
 \times U \times E \rightarrow {\mathbb R}^n $,
 $f: {\cal T} \times \Omega \times {\mathbb R}^n \times {\mathbb R}^m\times {\mathbb R}^m \times {\mathbb R}^m\times {M}^{\nu,2}( E; \mathbb R^m)
 \times U \rightarrow {\mathbb R}^m,
 \phi: \Omega \times {\mathbb R}^n  \rightarrow {\mathbb R^m}$
 and $h: {\cal T} \times \Omega \times {\mathbb R}^n
 \times U \rightarrow {\mathbb R}$  are given random mapping with
 $U$ being a nonempty convex subset of $\mathbb R^k.$ In the above equations, $u(\cdot)$ is our admissible control
 process defined as follows.

\begin{definition}
   A  stochastic process $u(\cdot)$ is
    said to be an admissible control
    process if it is
   a  ${\mathscr P}^Y$-measurable process valued in an nonempty
  convex subset $U$  in  $\mathbb R^K$ satisfying  $$\mathbb E\bigg[\int_0^T|u(t)|^4
dt\bigg]<\infty.$$
   We denote the set of all admissible controls by $\cal A.$
\end{definition}

The following standard  assumptions
 are imposed on the coefficients of the equations \eqref{eq:1} and
 \eqref{eq:2}.

\begin{ass}\label{ass:1.1}
(i)The mappings $b$, $\sigma_1,\sigma_2 $ and $h$  are ${\mathscr P} \otimes {\mathscr
B} ({\mathbb R}^n)  \otimes {\mathscr B}
(U) $-measurable,   $g$  is  ${\mathscr P} \otimes {\mathscr
B} ({\mathbb R}^n)  \otimes {\mathscr B}
(U) \otimes {\mathscr B}
(E) $-measurable. For almost all $(t, \omega)\in {\cal T}
\times \Omega$, the mappings $b,\sigma_1,\sigma_2, h$ and $g$
are of  appropriate growths with respect to
$(x, u),$ i.e., there exist a constant $C$ and
a deterministic function $ C(e)$
such that  for  all $x\in \mathbb
R^n, u\in U$ and a.e. $(t, \omega)\in {\cal T} \times \Omega,$
\begin{eqnarray*}
\left\{
\begin{aligned}
& |\alpha(t,x,u)|
\leq C (1+|x|+|u|), \alpha=b, \sigma_1,
\\&|\beta(t,x,u)|\leq
C, \beta=h, \sigma_2,
\\& |g(t,x,u,e)|\leq C(e)(1+|x|+|u|),
 \\& \int_{E}|C(e)|^4\nu(de)<\infty.
\end{aligned}
\right.
\end{eqnarray*}
Moreover, for all $(t, \omega, e) \in {\cal T} \times \Omega\times E$,  $b$, $\sigma_1, \sigma_2, h$, and $g$ are continuous  differentiable with respect to
$(x, u)$ and  the corresponding derivatives
are continuous and uniformly bounded, i.e.,
there exist a constant $L$ and
a deterministic function $L(e)$
 such that for  all $x\in \mathbb
R^n, u\in U$ and a.e. $(t, \omega)\in {\cal T} \times \Omega,$
\begin{eqnarray*}
\left\{
\begin{aligned}
& |\alpha_x(t,x,u)|
+|\alpha_u(t,x,u)|
\leq L, \alpha=b, \sigma_1, \sigma_2, h,
\\&|g_{x}(t,x,u,e)|
+|g_{u}(t,x,u,e)|\leq
L(e),
\\&\int_{E}|L(e)|^4\nu(de)< \infty.
\end{aligned}
\right.
\end{eqnarray*}
(ii)The coefficient $f$ is  ${\mathscr P} \otimes {\mathscr
B} ({\mathbb R}^n)\otimes {\mathscr
B} ({\mathbb R}^m)\otimes {\mathscr
B} ({\mathbb R}^m) \otimes {\mathscr
B} ({\mathbb R}^m)\otimes  {\mathscr B} ({M}^{\nu,2}( E; \mathbb R^m))\otimes {\mathscr B}(U) $-measurable
and $f(\cdot,0,0,0,0,0,0)\in M_{\mathscr{F}}^4(0,T;L^2 (0,T; \mathbb R^m))$. For almost all $(t, \omega)\in {\cal T}
\times \Omega$, the mapping $f$
is  G\^ateaux differentiable
with respect to $(x,y,z_1,z_2, \Lambda(\cdot),u) \in \mathbb {R}^n\times \mathbb  R^m\times \mathbb R^m\times \mathbb R^m \times {M}^{\nu,2}( E; \mathbb R^m)\times
U$  with continuous and uniformly bounded G\^ateaux derivatives.\\
(iii) The coefficient
$\phi$ is ${\mathscr F}_T \otimes {\mathscr B} ({\mathbb
R}^n) $-measurable.  For almost all $(t, \omega)\in [0,T]
\times \Omega$, the mapping $\phi$
 is continuous differentiable with respect to $x$ with
appropriate growths.
More precisely, there exists a constant $C
> 0$  such that for  all $x\in \mathbb {R}^n$ and a.e. $ \omega\in  \Omega,$

\begin{eqnarray}
\begin{split}
& (1+|x|)^{-1}
|\phi(x)|
+|\phi_x(x)|
\leq C.
\end{split}
\end{eqnarray}

\end{ass}

Now we
begin to discuss  the well- posedness
of \eqref{eq:1} and \eqref{eq:2}.
Indeed, putting \eqref{eq:2} into the state equation \eqref{eq:1}, we get that
\begin{eqnarray} \label{eq:4}
\left\{
\begin{aligned}
dx(t)=&(b-\sigma_2h)(t,x(t),u(t))dt+ \sigma_1(t, x(t), u(t)) dW(t)+\sigma _2(t, x(t), u(t)) dY(t)+\int_{E}g (t, x(t-), u(t),e) \tilde\mu(dt,de),
\\
dy(t)=&(f(t,x(t),y(t),z_1(t),z_2(t),\Lambda(t,\cdot), u(t))
-z_2(t)h(t,x(t),u(t)))dt+ z_1(t) dW(t)+  z_2(t) dY(t)+\int_{E}\Lambda(t,e) \tilde\mu(dt,de),\\
x(0)=&x,\\
y(T)=&\Phi(x(T)).
\end{aligned}
\right.
\end{eqnarray}
Under Assumption \ref{ass:1.1}, for any
admissible control $u(\cdot)\in \cal A,$
we have the following basic
result.

\begin{lemma}\label{lem:3.3}
  Let Assumption \ref{ass:1.1}
  be satisfied. Then for any
  admissible control $u(\cdot)\in
  \cal A,$  the  equation \eqref{eq:4} have a unique strong
  solution $( x(\cdot),  y(\cdot), z_1(\cdot), z_2(\cdot), \Lambda(\cdot,\cdot))\in S_{\mathscr{F}}^4 ([0, T];\mathbb R^{n})\times S_{\mathscr{F}}^4 (0,T;\mathbb R^{m})
  \times M_{\mathscr{F}}^4(\Omega;L^2 (0,T; \mathbb R^m))\times M_{\mathscr{F}}^4(\Omega;L^2 (0,T; \mathbb R^m))\times M_{\mathscr{F}}^4(\Omega;
  {M}^{\nu,2}{([0,T]\times  E; \mathbb R^m)}).$
Moreover, we have the following
estimate:

\begin{eqnarray}\label{eq:1.8}
\begin{split}
 &{\mathbb E} \bigg [ \sup_{t\in \cal T} | x(t) |^4 \bigg
] +{\mathbb E} \bigg [ \sup_{t\in \cal T} | y(t) |^4 \bigg
]+{\mathbb E} \bigg [ \bigg(\int_0^T | z_1(t) |^2 dt\bigg)^{2}\bigg]
+{\mathbb E} \bigg [ \bigg(\int_0^T | z_2(t) |^2 dt\bigg)^{2}\bigg
]+{\mathbb E} \bigg [ \bigg(\int_0^T
 \int_{E}| \Lambda(t,e) |^2 \nu(de)dt\bigg)^{2}\bigg
]
\\&\leq K\bigg \{1+|x|^4+\mathbb E\bigg[\Big(\int_0^T
|f(t,0,0,0,0,0,0)|^2dt\Big)^{2}\bigg]+\mathbb E\bigg[\int_0^T |u(t)|^4dt\bigg] \bigg\}.
\end{split}
\end{eqnarray}
Further, if $(\bar x(\cdot), \bar y(\cdot), \bar z_1(\cdot),
\bar z_2(\cdot),\bar\Lambda(\cdot))$ is the unique strong solution
corresponding to another admissible control  $ \bar u(\cdot)\in \cal A,$ then
we have  the following
estimate:
\begin{eqnarray}\label{eq:1.15}
\begin{split}
&{\mathbb E} \bigg [ \sup_{t\in \cal T} | x (t) - \bar x(t) |^4
\bigg ]+{\mathbb E} \bigg [ \sup_{t\in \cal T} | y (t) - \bar y(t) |^4
\bigg ]+{\mathbb E} \bigg [ \bigg(\int_0^T | z_1(t)-\bar z_1(t) |^2 dt\bigg)^{2}\bigg
]+{\mathbb E} \bigg [ \bigg(\int_0^T | z_2(t)-\bar z_2(t) |^2 dt\bigg)^{2}\bigg
]
\\&+{\mathbb E} \bigg [ \bigg(\int_0^T
 \int_{E}| \Lambda(t,e) -\bar \Lambda(t,e)|^2 \nu(de)dt\bigg)^{2}\bigg
]
\\&\leq  K {\mathbb E} \bigg [ \int_0^T| u(t)- \bar u (t)
|^4 dt\bigg ].
\end{split}
\end{eqnarray}
\end{lemma}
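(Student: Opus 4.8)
Since the system \eqref{eq:4} is decoupled — the forward equation for $x(\cdot)$ does not involve $(y,z_1,z_2,\Lambda)$ — the plan is to solve first the forward equation of jump‑diffusion type for $x(\cdot)$ and then, after freezing the resulting trajectory in the terminal value and in the generator, to solve the backward equation for $(y(\cdot),z_1(\cdot),z_2(\cdot),\Lambda(\cdot,\cdot))$. Under Assumption \ref{ass:1.1} the coefficients $b-\sigma_2h,\ \sigma_1,\ \sigma_2,\ g$ of the forward part are predictable, uniformly Lipschitz in $x$ and of at most linear growth in $(x,u)$, with the jump coefficient controlled by $C(e)(1+|x|+|u|)$ and $\int_E|C(e)|^4\nu(de)<\infty$, so existence and uniqueness of a strong solution $x(\cdot)$ follows from the classical contraction argument for It\^o equations with jumps. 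Once $x(\cdot)$ and $u(\cdot)$ are fixed, the generator $\hat f(t,y,z_1,z_2,\Lambda):=f(t,x(t),y,z_1,z_2,\Lambda,u(t))-z_2h(t,x(t),u(t))$ is uniformly Lipschitz in $(y,z_1,z_2,\Lambda)$ — the extra term $-z_2h$ only enlarges the $z_2$‑Lipschitz constant since $|h|\le C$ — and $\Phi(x(T))$ is ${\mathscr F}_T$‑measurable, so existence and uniqueness of $(y,z_1,z_2,\Lambda)$ follows from the corresponding contraction argument for backward equations with jumps, once the data are shown to lie in the appropriate $L^4$‑spaces.

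For the forward a priori bound in \eqref{eq:1.8}, I would apply It\^o's formula to $|x(t)|^4$ (equivalently to $|x(t)|^2$ and then square), take the supremum over $t\in{\cal T}$, take expectations, and control the stochastic integrals by the Burkholder--Davis--Gundy inequality. The Brownian integrals contribute terms of the form $\mathbb E[(\int_0^T|\sigma_i(t,x(t),u(t))|^2dt)^2]$, which, by the growth of $\sigma_1$ and the boundedness of $\sigma_2$, are dominated by $\varepsilon\,\mathbb E[\sup_t|x(t)|^4]+K\mathbb E[(\int_0^T|u(t)|^2dt)^2]$; the decisive contribution is that of the compensated Poisson integral, for which the Burkholder--Davis--Gundy inequality produces, besides $\mathbb E[(\int_0^T\int_E|g|^2\nu(de)dt)^2]$, the additional term $\mathbb E[\int_0^T\int_E|g(t,x(t),u(t),e)|^4\nu(de)dt]$. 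Using the growth bound on $g$ and $\int_E|C(e)|^4\nu(de)<\infty$ this is dominated by $K\mathbb E[\int_0^T(1+|x(t)|^4+|u(t)|^4)dt]$, and it is precisely this term that makes the integrability $\mathbb E[\int_0^T|u(t)|^4dt]<\infty$ indispensable — the weaker bound $\mathbb E[(\int_0^T|u(t)|^2dt)^2]<\infty$ that suffices in the purely Brownian setting is not enough here. A Gronwall argument then closes the estimate for $\mathbb E[\sup_t|x(t)|^4]$.

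Next I would feed $x(\cdot)$ into the backward part and check admissibility of the data: $\Phi(x(T))\in L^4(\Omega,{\mathscr F}_T,\mathbb P;\mathbb R^m)$ by the linear growth of $\Phi$ and the forward bound, while $\hat f(t,0,0,0,0)=f(t,x(t),0,0,0,0,u(t))$ obeys $|\hat f(t,0,0,0,0)|\le|f(t,0,0,0,0,0,0)|+K(|x(t)|+|u(t)|)$ by the boundedness of the G\^ateaux derivatives of $f$, whence $\mathbb E[(\int_0^T|\hat f(t,0,0,0,0)|^2dt)^2]<\infty$ — here one uses $\mathbb E[(\int_0^T|u(t)|^2dt)^2]\le T\,\mathbb E[\int_0^T|u(t)|^4dt]<\infty$ by Jensen's inequality. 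For the $L^4$‑estimate of the jump backward equation the plan is to apply It\^o's formula to $|y(t)|^2$, keep $|y(t)|^2$ together with the increasing purely discontinuous process $\int_t^T\int_E|\Lambda(s,e)|^2\mu(de,ds)$ and $\int_t^T(|z_1(s)|^2+|z_2(s)|^2)ds$ on the left, square, take expectations, and bound the remaining martingale $\int_0^\cdot 2y^\top(z_1dW+z_2dY)+\int_0^\cdot\int_E 2y(s-)^\top\Lambda\,\tilde\mu(de,ds)$ by the Burkholder--Davis--Gundy inequality, noting that its predictable quadratic variation involves $|\Lambda|^2$ only, never $|\Lambda|^4$. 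Young's inequality then absorbs small multiples of $\mathbb E[\sup_t|y(t)|^4]$, $\mathbb E[(\int_0^T(|z_1|^2+|z_2|^2)dt)^2]$ and $\mathbb E[(\int_0^T\int_E|\Lambda|^2\mu(de,dt))^2]$ into the left‑hand side, a domination inequality relating the jump measure $\mu$ to its compensator $dt\,\nu(de)$ converts the latter into the $\nu$‑integrated quantity appearing in \eqref{eq:1.8}, and a Gronwall argument combined with the forward bound gives \eqref{eq:1.8}.

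Finally, for the stability estimate \eqref{eq:1.15}, writing $\hat x=x-\bar x$, $\hat u=u-\bar u$ and analogously for the remaining components, the mean value theorem expresses each coefficient difference as a bounded (matrix‑valued) factor acting on $\hat x$ (respectively on $\hat x,\hat y,\hat z_1,\hat z_2,\hat\Lambda$) plus a remainder dominated by $K(|\hat x|+|\hat u|)$; the equation for $\hat x$ is then of the type already handled, so the forward $L^4$‑estimate yields $\mathbb E[\sup_t|\hat x(t)|^4]\le K\mathbb E[\int_0^T|\hat u(t)|^4dt]$, and the backward $L^4$‑estimate applied to $(\hat y,\hat z_1,\hat z_2,\hat\Lambda)$, whose terminal value $\Phi(x(T))-\Phi(\bar x(T))$ is bounded in modulus by $K|\hat x(T)|$ and whose free term is bounded by $K(|\hat x|+|\hat u|)$, gives \eqref{eq:1.15} after substituting the bound on $\hat x$ and again invoking Jensen's inequality for $\hat u$. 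The main obstacle throughout is precisely the $L^4$ a priori estimate of the backward jump equation: one must control carefully the contribution of the compensated Poisson integrals to the fourth moment, which is where the Burkholder--Davis--Gundy inequality for jump martingales genuinely departs from the Brownian one and forces the $L^4$ integrability of the admissible controls, while at the same time verifying that the extra driver term $-z_2h$ does not destroy the Lipschitz structure needed to close the estimates.
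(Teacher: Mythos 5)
Your overall architecture is sound and is, in substance, the argument that the paper simply outsources to Lemma A.3 of \cite{FT2015} and Theorem 4.1 of \cite{CT2010}: decouple \eqref{eq:4}, solve the forward jump--diffusion SDE by contraction, get the $L^4$ bound via BDG/Kunita-type inequalities --- where, exactly as you say, the fourth-moment term $\mathbb E\int_0^T\int_E|g|^4\nu(de)dt$ is what forces $\mathbb E\int_0^T|u(t)|^4dt<\infty$ rather than $\mathbb E[(\int_0^T|u|^2dt)^2]<\infty$ --- and then run the standard $L^4$ estimate for the BSDE with jumps after checking that $\phi(x(T))$ and the frozen generator lie in the right spaces. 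For existence, uniqueness and \eqref{eq:1.8} your sketch is a correct, more self-contained version of the paper's (purely cited) proof, and your remark on converting $\mu$-integrals of $|\Lambda|^2$ into $\nu$-integrals is workable since $\int\!\!\int|\Lambda|^4\mu\le(\int\!\!\int|\Lambda|^2\mu)^2$.

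There is, however, a genuine gap in your stability step \eqref{eq:1.15}. You assert that in the difference BSDE the free term is ``bounded by $K(|\hat x|+|\hat u|)$''. The forward part is fine (both $\sigma_2$ and $h$ are bounded with bounded derivatives, so $\sigma_2h$ is Lipschitz in $(x,u)$), but in the backward driver the term $-z_2h$ produces $z_2(t)h(t,x,u)-\bar z_2(t)h(t,\bar x,\bar u)=\hat z_2(t)h(t,x,u)+\bar z_2(t)\bigl(h(t,x,u)-h(t,\bar x,\bar u)\bigr)$. The first piece indeed only enlarges the $z_2$-Lipschitz constant, but the second is bounded only by $L\,|\bar z_2(t)|\,(|\hat x(t)|+|\hat u(t)|)$, and $\bar z_2$ is not bounded --- it is only known to satisfy $\mathbb E[(\int_0^T|\bar z_2|^2dt)^2]<\infty$. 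So the free term contains the product $|\bar z_2|(|\hat x|+|\hat u|)$, and estimating its contribution, e.g.\ $\mathbb E\bigl[(\int_0^T|\bar z_2|(|\hat x|+|\hat u|)dt)^4\bigr]$ or the analogous quantity in the squared It\^o computation, by $K\,\mathbb E\int_0^T|\hat u|^4dt$ via H\"older would require eighth moments of either $\bar z_2$ or of $(\hat x,\hat u)$, which are not available under the $L^4$ admissibility condition. This correlated-noise cross term (it is the same mechanism that yields only the square-root-type modulus for $\rho$ in Lemma \ref{lem:3.4}) does not disappear under a change of measure: working under $\mathbb P^{u}$ or $\mathbb P^{\bar u}$ simply reproduces $z_2$ (or $\bar z_2$) times $(h^u-\bar h)$ in the driver. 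As written, the claim ``free term $\le K(|\hat x|+|\hat u|)$'' is false, and your sketch does not explain how to close this term; you would need either a separate treatment of it (accepting a weaker bound), extra integrability, or additional structural assumptions, so this step of the proposal does not yet establish \eqref{eq:1.15}.
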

\begin{proof}
  The proof can be directly obtained
  by  combining Lemma A.3 in
  \cite{FT2015} and Theorem 4.1 in
  \cite{CT2010}.
\end{proof}

For  the strong solution
 $( x^u(\cdot),  y^u(\cdot), z_1^u(\cdot), z_2^u(\cdot), \Lambda^u(\cdot, \cdot))$ of  the equation
 \eqref{eq:4} associated with
 any given admissible control
$u(\cdot)\in \cal A,$ we  introduce a process
\begin{eqnarray}\label{eq:7}
\begin{split}
  \rho^u(t)
  =\displaystyle
  \exp^{\bigg\{\displaystyle\int_0^t h(s, x^u(s), u(s))dY(s)
  -\frac{1}{2} h^2(s, x^u(s), u(s))ds\bigg\}},
  \end{split}
\end{eqnarray}
which is abviously the solution to the following  SDE
\begin{eqnarray} \label{eq:8}
  \left\{
\begin{aligned}
  d \rho^u(t)=&  \rho^u(t) h(s, x^u(s), u(s))dY(s)\\
  \rho^u(0)=&1.
\end{aligned}
\right.
\end{eqnarray}

 For the stochastic process
 $\rho^u(\cdot),$  we have the
 following  basic result.
 \begin{lemma}\label{lem:3.4}
  Let Assumption \ref{ass:1.1} holds. Then for any $u(\cdot)\in \cal A,$ we have
 for any $\alpha \geq 2,$
\begin{eqnarray}\label{eq:1.9}
\begin{split}
 {\mathbb E} \bigg [ \sup_{{t\in
 \cal T}} | \rho^u(t) |^\alpha \bigg
] \leq K.
\end{split}
\end{eqnarray}
Further, if $ \bar \rho(\cdot)$ is the
process defined by \eqref{eq:7}
or \eqref{eq:8}
corresponding to another
 admissible control $ \bar u(\cdot)\in \cal A,$  then the following
estimate holds
\begin{eqnarray}\label{eq:1.15}
{\mathbb E} \bigg [ \sup_{t\in \cal T} |
\rho^u (t) - \bar \rho (t) |^2
\bigg ]  \leq  K \bigg\{{\mathbb E} \bigg [ \int_0^T| u(t)- \bar u (t)
|^2 dt\bigg ]^{{2}}\bigg\}^{\frac{1}{2}}.
\end{eqnarray}
\end{lemma}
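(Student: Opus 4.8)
The plan is to handle the two estimates separately but with the same toolbox --- It\^o's formula, the Burkholder--Davis--Gundy (BDG) inequality, Young's inequality and Gronwall's lemma --- the decisive structural facts being that $h$ is \emph{uniformly bounded} by Assumption \ref{ass:1.1}(i), so that $\rho^u(\cdot)$ solves the linear SDE \eqref{eq:8} driven by the Brownian motion $Y(\cdot)$ with a bounded coefficient $h(\cdot,x^u(\cdot),u(\cdot))$, and that the first bound \eqref{eq:1.9} will already be at our disposal when we come to the second one. Note in particular that the forward state $x^u(\cdot)$ enters $\rho^u$ only through $h$, so \eqref{eq:1.9} will require no moment estimate on $x^u$ at all.

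For \eqref{eq:1.9}, since $\rho^u>0$ I would read off from \eqref{eq:7} that
\begin{equation*}
(\rho^u(t))^\alpha=\exp\Big\{\alpha\!\int_0^t h\,dY(s)-\tfrac{\alpha^2}{2}\!\int_0^t h^2\,ds\Big\}\cdot\exp\Big\{\tfrac{\alpha^2-\alpha}{2}\!\int_0^t h^2\,ds\Big\}=:\mathcal{E}_t\cdot R_t ,
\end{equation*}
where $\mathcal{E}_t$ is the Dol\'eans--Dade exponential of $\alpha\int_0^\cdot h\,dY$ and, since $|h|\le C$ and $\alpha\ge 2$, the factor $R_t$ is bounded by $\exp\{\tfrac{\alpha^2-\alpha}{2}C^2T\}$. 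Because $|h|\le C$, Novikov's condition holds for both $\alpha h$ and $2\alpha h$; hence $\mathcal{E}_\cdot$ is a genuine positive martingale with $\mathbb{E}[\mathcal{E}_t]=1$, and a short computation shows $\mathcal{E}_t^2$ equals the Dol\'eans--Dade exponential of $2\alpha\int_0^\cdot h\,dY$ times $\exp\{\alpha^2\int_0^t h^2\,ds\}\le e^{\alpha^2C^2T}$, so $\sup_t\mathbb{E}[\mathcal{E}_t^2]<\infty$. Doob's $L^2$-maximal inequality then gives $\mathbb{E}[\sup_t\mathcal{E}_t^2]\le K$, whence $\mathbb{E}[\sup_t\mathcal{E}_t]\le K$ and finally $\mathbb{E}[\sup_t(\rho^u(t))^\alpha]\le K$, i.e. \eqref{eq:1.9}. (If one prefers to avoid the exponential representation, one applies It\^o's formula to $(\rho^u(t))^\alpha$, takes $\sup_{s\le t}$ and $\mathbb{E}$, estimates the martingale term by BDG together with $(\rho^u(s))^{2\alpha}\le(\sup_{r\le t}(\rho^u(r))^\alpha)(\rho^u(s))^\alpha$, absorbs $\tfrac12\mathbb{E}[\sup_{s\le t}(\rho^u(s))^\alpha]$ via Young's inequality --- first on the stopped process $\rho^u(\cdot\wedge\tau_N)$ with $\tau_N:=\inf\{t:\rho^u(t)>N\}$, then letting $N\to\infty$ --- and concludes by Gronwall.)

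For the difference estimate I would set $\delta(t):=\rho^u(t)-\bar\rho(t)$, $h^u(t):=h(t,x^u(t),u(t))$, $\bar h(t):=h(t,\bar x(t),\bar u(t))$. Subtracting the two copies of \eqref{eq:8} gives the linear SDE $d\delta(t)=[\delta(t)h^u(t)+\bar\rho(t)(h^u(t)-\bar h(t))]\,dY(t)$, $\delta(0)=0$. Applying It\^o's formula to $|\delta(t)|^2$, taking $\sup_{s\le t}$ and $\mathbb{E}$, estimating the stochastic integral by BDG and then Young's inequality, and using $|h^u|\le C$, I would reach
\begin{equation*}
\mathbb{E}\Big[\sup_{s\le t}|\delta(s)|^2\Big]\le C\int_0^t\mathbb{E}\Big[\sup_{r\le s}|\delta(r)|^2\Big]ds+C\,\mathbb{E}\!\int_0^T\bar\rho(s)^2\,|h^u(s)-\bar h(s)|^2\,ds .
\end{equation*}
Here the absorption of the $\sup$-term needs no localisation, because $\mathbb{E}[\sup_s|\delta(s)|^2]<\infty$ already by \eqref{eq:1.9} applied to $\rho^u$ and to $\bar\rho$; Gronwall's lemma then reduces matters to bounding $\mathbb{E}\int_0^T\bar\rho^2|h^u-\bar h|^2\,ds$.

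For this last term the bounded derivatives of $h$ in $(x,u)$ (Assumption \ref{ass:1.1}(i)) give $|h^u(s)-\bar h(s)|\le L(|x^u(s)-\bar x(s)|+|u(s)-\bar u(s)|)$, so it is at most a constant times $\mathbb{E}\int_0^T\bar\rho(s)^2(|x^u(s)-\bar x(s)|^2+|u(s)-\bar u(s)|^2)\,ds$. For each of the two pieces, Cauchy--Schwarz gives $\mathbb{E}\int_0^T\bar\rho^2w^2\,ds\le(\mathbb{E}[\sup_s\bar\rho(s)^4])^{1/2}(\mathbb{E}[(\int_0^Tw^2ds)^2])^{1/2}$, the first factor being controlled by \eqref{eq:1.9}; then $\mathbb{E}[(\int_0^T|x^u-\bar x|^2ds)^2]\le T^2\,\mathbb{E}[\sup_s|x^u(s)-\bar x(s)|^4]$ is handled by the stability estimate of Lemma \ref{lem:3.3} (which bounds it by $K\,\mathbb{E}\int_0^T|u-\bar u|^4ds$), while $\mathbb{E}[(\int_0^T|u-\bar u|^2ds)^2]\le T\,\mathbb{E}\int_0^T|u-\bar u|^4ds$. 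Collecting the terms yields $\mathbb{E}[\sup_t|\delta(t)|^2]\le K(\mathbb{E}\int_0^T|u(t)-\bar u(t)|^4dt)^{1/2}$, which gives the asserted bound \eqref{eq:1.15}. The main obstacle throughout is that $\rho^u$ carries no deterministic uniform bound, only the $L^p$-bounds \eqref{eq:1.9}: every product involving $\rho^u$ must therefore be split off by H\"older's inequality rather than by treating $\rho^u$ as a constant, and the same issue resurfaces inside \eqref{eq:1.9}, where the BDG estimate of $(\rho^u)^\alpha$ naively produces the higher power $(\rho^u)^{2\alpha}$ and must be tamed either by the exponential/Novikov argument or by Young-plus-localisation; keeping the final right-hand side expressed through the $L^4$-norm of $u-\bar u$ then amounts to careful bookkeeping of the exponents in these inequalities.
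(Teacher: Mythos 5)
The paper offers no argument of its own here (its ``proof'' is a one-line appeal to an external Proposition 2.1 in a reference that does not even appear in the bibliography), so your self-contained proof is a genuinely different route, and most of it is sound. For \eqref{eq:1.9}, factoring $(\rho^u)^\alpha$ into the Dol\'eans--Dade exponential of $\alpha\int_0^\cdot h\,dY$ times a factor bounded by $e^{\frac{\alpha^2-\alpha}{2}C^2T}$ (using the uniform bound $|h|\le C$ from Assumption \ref{ass:1.1}), invoking Novikov, and applying Doob's maximal inequality is correct and clean; the alternative It\^o/BDG/Young/localisation route you sketch also works. For the difference estimate, the decomposition $d\delta=[\delta\,h^u+\bar\rho\,(h^u-\bar h)]\,dY$, the BDG--Young absorption (legitimately justified without stopping times by \eqref{eq:1.9} with $\alpha=2$), Gronwall, and the Cauchy--Schwarz splitting of $\mathbb{E}\int_0^T\bar\rho^2|h^u-\bar h|^2\,dt$ are all fine.

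The one genuine issue is your last sentence of the difference estimate. What your chain of inequalities actually yields is $\mathbb{E}\big[\sup_t|\rho^u(t)-\bar\rho(t)|^2\big]\le K\big(\mathbb{E}\int_0^T|u(t)-\bar u(t)|^4\,dt\big)^{1/2}$, whereas the display in Lemma \ref{lem:3.4}, read with the exponent $2$ acting on the time integral inside the expectation (the convention of \eqref{eq:1.8} and \eqref{eq:1.16}), claims a bound by $K\big\{\mathbb{E}\big[\big(\int_0^T|u-\bar u|^2\,dt\big)^2\big]\big\}^{1/2}$. Since $\big(\int_0^T|u-\bar u|^2dt\big)^2\le T\int_0^T|u-\bar u|^4dt$ by Cauchy--Schwarz, your right-hand side is the larger one, so it does not imply the stated inequality; the phrase ``which gives the asserted bound'' is not literally justified. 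The obstruction is not the control term (there you could retain $\big\{\mathbb{E}\big[\big(\int_0^T|u-\bar u|^2dt\big)^2\big]\big\}^{1/2}$ instead of weakening it), but the state term: the only available stability estimate, that of Lemma \ref{lem:3.3}, controls $\mathbb{E}\big[\sup_t|x^u(t)-\bar x(t)|^4\big]$ only by $K\,\mathbb{E}\int_0^T|u-\bar u|^4dt$ --- precisely because of the quartic BDG contribution of the Poisson integral that the introduction emphasizes --- so any proof routed through Lemma \ref{lem:3.3} lands on your form. You should therefore either state your conclusion in that (weaker) form, or flag that the display as printed appears to carry over the Brownian-only estimate of \cite{MST} and is not what the paper's own Lemma \ref{lem:3.3} supports; note that your version is entirely sufficient for the only later use, Lemma \ref{lem:3.5}, since with $u^\epsilon-\bar u=\epsilon(u-\bar u)$ both forms are $O(\epsilon^2)$.
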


\begin{proof}
  The proof can be directly obtained
  by  combining Proposition 2.1 in
  \cite{Mou}.
\end{proof}

Under Assumption \ref{ass:1.1},
$\rho^u(\cdot)$ is
an  $( \Omega,
{\mathscr F}, \{\mathscr{F}_t\}_{t\in {\cal T}}, {\mathbb P} )-$
martingale. Define a new probability measure $\mathbb P^u$ on $(\Omega, \mathscr F)$ by
\begin{eqnarray}
  d\mathbb P^u=\rho^u(1)d\mathbb P.
\end{eqnarray}
 Then from Girsanov's theorem and \eqref{eq:2}, $(W(\cdot),W^u(\cdot))$ is an
$\mathbb R^2$-valued standard Brownian motion defined in the new probability
space $(\Omega, \mathscr F, \{\mathscr{F}_t\}_{0\leq t\leq T},\mathbb P^u).$
So $(\mathbb P^u, x^u(\cdot), y^u(\cdot),
z^u_1(\cdot), z^u_2(\cdot), \Lambda^u(\cdot,\cdot), \rho^u(\cdot),
 W(\cdot), W^u(\cdot))$ is a weak
solution on $(\Omega, \mathscr F, \{\mathscr{F}_t\}_{t\in \cal
T})$ of  \eqref{eq:1} and
\eqref{eq:2}.

 The cost functional is given by
\begin{eqnarray}\label{eq:13}
  \begin{split}
    J(u(\cdot)=\mathbb E^u\bigg[\int_0^Tl(t,x(t),y(t),z_1(t),z_2(t),
    \Lambda(t,\cdot), u(t))dt+ \Phi(x(T))+\gamma(y(0))\bigg].
  \end{split}
\end{eqnarray}
 where $\mathbb E^u$ denotes the expectation with respect to the
probability space $(\Omega, \mathscr F, \{\mathscr{F}_t\}_{0\leq
t\leq T},\mathbb P^u)$ and  $l:
{\cal T} \times \Omega \times {\mathbb R}^n \times {\mathbb R}^m
\times {\mathbb R}^m\times {\mathbb R}^m
\times {M}^{\nu,2}( E; \mathbb R^m)
 \times U \rightarrow {\mathbb R},$ $\Phi: \Omega \times {\mathbb R}^n  \rightarrow {\mathbb R}$
 and  $\gamma: \Omega \times {\mathbb R}^m \rightarrow {\mathbb R}$
  are given random mappings
satisfying  the following assumption:

 \begin{ass}\label{ass:1.2}
 $l$ is ${\mathscr P} \otimes {\mathscr
B} ({\mathbb R}^n) \otimes {\mathscr B} ({\mathbb R}^m)\otimes {\mathscr B} ({\mathbb R}^m) \otimes {\mathscr B} ({\mathbb R}^m)
\otimes {\mathscr B}({M}^{\nu,2}( E; \mathbb R^m))\otimes {\mathscr B}
(U) $-measurable,  $\Phi$ is ${\mathscr F}_T \otimes {\mathscr B} ({\mathbb
R}^n) $-measurable, and $\gamma$ is ${
\mathscr F}_0 \otimes {\mathscr B} ({\mathbb
R}^n) $-measurable.    For almost all $(t, \omega)\in [0,T]
\times \Omega$, the mappings
\begin{eqnarray*}
(x,y,z_1, z_2,\Lambda(\cdot), u) \rightarrow l(t,\omega,x, y, z_1,z_2, \Lambda(\cdot), u),
\end{eqnarray*}
\begin{eqnarray*}
x \rightarrow \Phi(\omega,x)
\end{eqnarray*}
and
\begin{eqnarray*}
y\rightarrow \gamma(\omega,y)
\end{eqnarray*}
are continuous G\^ateaux differentiable
 with respect to $(x, y,z_1, z_2, \Lambda(\cdot),u)$ with
appropriate growths, respectively.
More precisely, there exists a constant $C
> 0$  such that for  all $(x,y, z_1,z_2, \Lambda(\cdot),
u) \in \mathbb {R}^n\times \mathbb R^m
\times \mathbb R^m\times \mathbb R^m
\times {M}^{\nu,2}( E; \mathbb R^m)\times U$ and a.e. $(t, \omega)\in [0,T]
\times \Omega,$
\begin{eqnarray*}
\left\{
\begin{aligned}
&
(1+|x|+|y|+|z_1|+|z_2|+||\Lambda(\cdot)||_{{M}^{\nu,2}( E; \mathbb R^m)}+|u|)^{-1} (|l_x(t,x,y,z_1,z_2,\Lambda(\cdot), u)|+|l_y(t,x,y,z_1,z_2,\Lambda(\cdot), u)|
\\&+
|l_{z_1}(t,x,y,z_1,z_2,\Lambda(\cdot), u)|
+|l_{z_2}(t,x,y,z_1,z_2,
\Lambda(\cdot),u)|
+|l_u(t,x,y,z_1,z_2,\Lambda(\cdot),u)|+
||l_\Lambda(t,x,y,z_1,z_2,\Lambda(\cdot),u)
||_{{M}^{\nu,2}( E; \mathbb R^m)}
\\&+(1+|x|^2+|y|^2+|z_1|^2
+|z_2|^2+||\Lambda(\cdot)||^2_{{M}^{\nu,2}( E; \mathbb R^m)}
+|u|^2)^{-1}|l(t,x,y,z_1,z_2,\Lambda(\cdot),u)|
 \leq C;
\\
& (1+|x|^2)^{-1}|\Phi(x)| +(1+|x|)^{-1}|\Phi_x(x)|\leq
C;
\\
& (1+|y|^2)^{-1}|\gamma(y)| +(1+|y|)^{-1}|\gamma_y(y)|\leq
C.
\end{aligned}
\right.
\end{eqnarray*}
\end{ass}
 Under  Assumption \ref{ass:1.1} and
 \ref{ass:1.2},
 by the estimates \eqref{eq:1.8} and
 \eqref{eq:1.9},  we get that

\begin{eqnarray}
  \begin{split}
    |J(u(\cdot))|\leq & K  \bigg\{\mathbb E\bigg[\sup_{t\in {\cal T}}
    |\rho^u(t)|^2\bigg]\bigg\}^{\frac{1}{2}}
    \bigg\{ \mathbb E\bigg[\sup_{
    {t\in \cal T}}
    |x(t)|^4\bigg]+\mathbb E\bigg[\sup_{
    {t\in \cal T}}
    |y(t)|^4\bigg]
+\mathbb E\bigg[\bigg(\int_0^T|z_1(t)|^2
dt\bigg)^{2}\bigg]
\\&\quad\quad+\mathbb E\bigg[\bigg(\int_0^T|z_2(t)|^2
dt\bigg)^{2}\bigg]+{\mathbb E} \bigg [ \bigg(\int_0^T
 \int_{E}| \Lambda(t,e) |^2 \nu(de)dt\bigg)^{2}\bigg
]
+\mathbb E\bigg[\int_0^T|u(t)|^4
dt\bigg] +1\bigg\}^{\frac{1}{2}}
    \\  <& \infty,
  \end{split}
\end{eqnarray}
which implies that
the cost functional is well-defined.

Then we  can put forward the following partially observed optimal control problem  in its weak formulation,
 i.e., with changing
 the reference probability space $(\Omega, \mathscr F, \{\mathscr{F}_t\}_{0\leq
t\leq T},\mathbb P^u),$ as follows.

\begin{pro}
\label{pro:1.1} Find an admissible control $\bar{u}(\cdot)\in \cal$ such that
\begin{equation*}  \label{eq:b7}
J(\bar{u}(\cdot))=\displaystyle\inf_{u(\cdot)
\in \cal A}J(u(\cdot)),
\end{equation*}
subject to the
 state equation \eqref{eq:1}, the
 observation equation \eqref{eq:2}
 and the cost functional \eqref{eq:13}.

\end{pro}
Obviously, according to  Bayes' formula,
 the cost functional \eqref{eq:13} can be rewritten as
\begin{equation}\label{eq:15}
\begin{split}
J(u(\cdot ))=& \mathbb E\displaystyle\bigg[%
\int_{0}^{T}\rho^u(t)l(t,x(t),y(t),z_1(t),z_2(t),
\Lambda(t,\cdot), u(t))dt +\rho^u(T)\Phi(x(T))
+\gamma(y(0))\bigg].
\end{split}
\end{equation}
  Therefore, we  can translate   Problem \ref{pro:1.1}
  into  the following  equivalent optimal control problem in
  its strong formulation, i.e., without changing the  reference
probability space $(\Omega, \mathscr F, \{\mathscr{F}_t\}_{0\leq t\leq T},\mathbb P),$
where $\rho^u(\cdot)$ will be regarded as
an additional  state process besides the
state process $(x^u(\cdot), y^u(\cdot),
z^u_1(\cdot), z^u_2(\cdot),\Lambda(\cdot,\cdot)).$

\begin{pro}
\label{pro:1.2} Find an admissible control $\bar{u}(\cdot)$ such that
\begin{equation*}  \label{eq:b7}
J(\bar{u}(\cdot))=\displaystyle\inf_{u(\cdot)
\in \cal A}J(u(\cdot)),
\end{equation*}
 subject to
 the cost functional \eqref{eq:15}
 and  the following
 state equation
\begin{equation}
\displaystyle\left\{
\begin{array}{lll}
dx(t)=&(b-\sigma_2h)(t,x(t),u(t))dt+ \sigma_1(t, x(t), u(t)) dW(t)+\sigma _2(t, x(t), u(t)) dY(t)+\int_{E}g (t, x(t-), u(t),e) \tilde \mu(dt,de),
\\
dy(t)=&(f(t,x(t),y(t),z_1(t),z_2(t),\Lambda(t,\cdot),
u(t))
-z_2(t)h(t,x(t),u(t)))dt+ z_1(t) dW(t)+  z_2(t) dY(t)+\int_{E}\Lambda(t,e) \tilde \mu(dt,de),\\
d \rho^u(t)=&  \rho^u(t) h(s, x^u(s), u(s))dY(s),\\
  \rho^u(0)=&1,
\\
x(0)=&x,\\
y(T)=&\Phi(x(T)).
\end{array}%
\right.  \label{eq:3.7}
\end{equation}
\end{pro}
Any $\bar{u}(\cdot)\in \cal A$ satisfying above is called an optimal
control process of Problem \ref{pro:1.2} and the corresponding state
process $(\bar x(\cdot),
\bar y(\cdot),
\bar z_1(\cdot), \bar z_2(\cdot),
\bar \Lambda(\cdot,\cdot),
\bar \rho(\cdot))$ is called the optimal
state process. Correspondingly $(\bar{u}(\cdot);\bar x(\cdot),
\bar y(\cdot),
\bar z_1(\cdot), \bar z_2(\cdot),
\bar \Lambda(\cdot,\cdot),
\bar \rho(\cdot))$ is called an optimal pair of Problem \ref{pro:1.2}.
\begin{rmk}
 The present formulation
of the partially observed optimal control problem is quite similar
to a completely observed optimal control problem; the only
difference lies in the admissible class $\cal A$ of controls.
\end{rmk}

\section{Variation Calculus of The Cost Functional }

This purpose of this section 
is to  give a variation calculus of th cost Functional by Hamiltonian and 
adjoint processes. To this end, 
for the state equation \eqref{eq:3.7},
we first define  the
corresponding adjoint equation. Introduce  the Hamiltonian
 ${\cal H}: \Omega \times {\cal T} \times \mathbb R^n \times \mathbb R^m \times \mathbb R^{m}
  \times \mathbb R^{m}\times {M}^{\nu,2}( E; \mathbb R^m)\times U\times
\mathbb R^n\times \mathbb R^n
\times \mathbb R^n
  \times \mathbb R^{m}
  \times \mathbb R\rightarrow \mathbb R$  by
\begin{eqnarray}\label{eq6}
&& {\cal H} (t, x, y, z_1, z_2, \Lambda(\cdot), u, p,q_1, q_2, q_3(\cdot), k, R_2) \nonumber \\
&& = l (t, x, y, z_1,z_2, \Lambda(\cdot), u)
+ \langle b (t, x, u),  p \rangle
+ \langle \sigma_1(t, x, u), q_1\rangle + \langle \sigma_2 (t, x, u), q_2\rangle
+ \int_{E}\langle g (t, x, u,e), q_3(e)\rangle
\nu(de) \nonumber
\\&&\quad +
\langle f(t,x,y,z_1,z_2,\Lambda(\cdot),u), k\rangle  +\langle R_2, h(t,x,u)\rangle \ .
\end{eqnarray}

For any given admissible control pair $(\bar u(\cdot); \bar x(\cdot),
 \bar y(\cdot), \bar z_1(\cdot),
 \bar z_2(\cdot),\Lambda(\cdot,\cdot)), $ we define the corresponding adjoint equation by

\begin{numcases}{}\label{eq:18}
\begin{split}
d\bar r(t)&=- l(t,\bar x(t),
 \bar y(t), \bar z_1(t),
 \bar z_2(t),\bar\Lambda(t,\cdot), \bar u(t))dt
 +\bar R_1\left(
t\right)  dW\left(  t\right) +{\bar R}_{2}\left( t\right) dW^{
\bar u}\left( t\right)+\int_{E}\bar R_3(t,e) \tilde \mu(dt,de),
\\
d\bar p\left(  t\right)
 &=-{\cal {H}}_{x}\left( t,\bar x(t),
 \bar y(t), \bar z_1(t),
 \bar z_2(t),\bar \Lambda(t,\cdot) \bar u(t) \right)dt
 +\bar q_1
\left(  t\right)  dW\left(  t\right)  +{\bar q}_{2}\left( t\right) dW^{
\bar u}\left( t\right)+\int_{E}\bar q_3(t,e) \tilde \mu(dt,de),
\\
d\bar k\left(  t\right)  &=-{\cal {H}}_{y}\left(  t,\bar x(t),
 \bar y(t), \bar z_1(t),
 \bar z_2(t), \bar \Lambda(t,\cdot), \bar u(t) \right)dt-
{\cal {H}}_{z_1}\left(  t,\bar x(t),
 \bar y(t), \bar z_1(t),
 \bar z_2(t),\Lambda(t,\cdot), \bar u(t) \right)  dW\left(  t\right)
  \\&\quad\quad-{\cal {H}}_{z_2}\left(  t,\bar x(t),
 \bar y(t), \bar z_1(t),
 \bar z_2(t), \bar\Lambda(t,\cdot), \bar u(t) \right)dW^{
u}\left( t\right)-\int_{E} {\cal {H}}_{\Lambda}\left(  t,\bar x(t-),
 \bar y(t-), \bar z_1(t),
 \bar z_2(t), \bar\Lambda(t,\cdot) \bar u(t) \right)\tilde \mu(dt,de),
\\ \bar p(T)&=\Phi_x(\bar x(T))-\phi_x^*(\bar x(T))\bar k(T),
\\ \bar r(T)&=\Phi(\bar x(T)),
\\ \bar k(0)&=-\gamma_y(\bar y(0)),
\end{split}
\end{numcases}
where the following short-hand notation is used:
\begin{eqnarray}\label{eq:19}
\begin{split}
  &{\cal H}_a(t,\bar x(t),
 \bar y(t), \bar z_1(t),
 \bar z_2(t), \bar \Lambda(t,\cdot), \bar u(t))
 \\=&{\cal {H}}_{a}\left(  t,\bar x(t),
 \bar y(t), \bar z_1(t),
 \bar z_2(t),\bar \Lambda(t,\cdot), \bar u(t), \bar p(t), \bar q_1(t), \bar q_2(t),
 \bar q_3(t,\cdot),
  \bar k(t-), \bar R_2(t)-\bar\sigma_2^\top(t, x(t), u(t))\bar p(t)-\bar z_2 ^\top(t)
  \bar k(t) \right)
  \end{split}
\end{eqnarray}
where $a=x, y,z_1, z_2,\Lambda(\cdot),u.$

It is obvious that the adjoint equation \eqref{eq:18}
 a forward-backward
stochastic differential equation, and its solution
consists of  an 9-tuple process $(\bar p(\cdot),\bar q_1(\cdot), \bar { q}_2(\cdot),\bar q_3(\cdot,\cdot), \bar k(\cdot),r(\cdot),\bar R_1(\cdot),\bar R_2(\cdot ),
\bar R_3(\cdot,\cdot) ).$
By Assumptions \ref{ass:1.1} and
\ref{ass:1.2} and  by  Lemma A.3 in
  \cite{FT2015} and Theorem 4.1 in
  \cite{CT2010}, we know that  the adjoint equation \eqref{eq:18} have
a unique solution $(\bar p(\cdot),\bar q_1(\cdot), \bar { q}_2(\cdot),\bar { q}_3(\cdot,\cdot),
\bar k(\cdot), \bar r(\cdot),\bar R_1(\cdot),\bar R_2(\cdot ),\bar { R}_3(\cdot,\cdot) )\in S_{\mathscr{F}}^4(0,T;
\mathbb R^n)\times
M_{\mathscr{F}}^4(\Omega;L^2 (0,T; \mathbb R^n)) \times M_{\mathscr{F}}^4(\Omega;L^2 (0,T; \mathbb R^n)) \times M_{\mathscr{F}}^4(\Omega;
  {M}^{\nu,2}{([0,T]\times  E; \mathbb R^n)}) \times S_{\mathscr{F}}^4(0,T;
\mathbb R^m)\times S_{\mathscr{F}}^4(0,T;
\mathbb R)\times
M_{\mathscr{F}}^4(\Omega;L^2 (0,T; \mathbb R)) \times M_{\mathscr{F}}^4(\Omega;L^2 (0,T; \mathbb R)) \times M_{\mathscr{F}}^4(\Omega;
  {M}^{\nu,2}{([0,T]\times  E; R)}),
$  also said to be the adjoint process 
associated with
the admissible pair $(\bar{u}(\cdot);\bar x(\cdot),
\bar y(\cdot),
\bar z_1(\cdot), \bar z_2(\cdot),
\bar \Lambda(\cdot,\cdot),
\bar \rho(\cdot))$.

Suppose that $({u}(\cdot);x^u(\cdot),
 y^u(\cdot),
 z^u_1(\cdot),  z^u_2(\cdot),
 \Lambda^u(\cdot),
\rho^u(\cdot))$ and $(\bar{u}(\cdot);\bar x(\cdot),
\bar y(\cdot),
\bar z_1(\cdot), \bar z_2(\cdot),
\bar \Lambda(\cdot,\cdot),
\bar \rho(\cdot))$ are
two admissible pairs. In the following, we
will  give an formula for the difference $J (u (\cdot)) - J (\bar u (\cdot))$
 using the Hamiltonian ${\cal H}$ and the adjoint process $(\bar p(\cdot),\bar q_1(\cdot), \bar { q}_2(\cdot), \bar q_3(\cdot,\cdot),\bar k(\cdot),\bar R_1(\cdot),\bar R_2(\cdot ), \bar R_3(\cdot,\cdot)).$
as well as other relevant expressions.

To simplify  our notation,  the
following short-hand notations are introduced:
\begin{eqnarray}\label{eq:3.16}
\left\{
\begin{aligned}
& \gamma^u (0)
= \gamma( y^u (0)) \ ,
 \bar \gamma(0) =
 \gamma( \bar y (0)), \
 \\& \phi^u (T)
= \phi( x^u (T)),
 \bar \phi(T) =  \phi( \bar x (T)) \ ,
\\& \Phi^u (T)
= \Phi( x^u (T)),
 \bar \Phi(T) =  \Phi( \bar x (T)),\\
& \alpha^u (t) = \alpha ( t, x^u (t),u (t) ),\\
& \bar \alpha (t) = \alpha( t, \bar x (t), \bar
u(t) ), \quad \alpha =  b, \sigma_1,
\sigma_2, h,
\\
& \beta^u (t) = \alpha ( t, x^u (t),
y^u(t), z^u_1(t),z^u_2(t), \Lambda^u(t,\cdot),
u (t) ),\\
& \bar \beta (t) = \alpha( t, \bar x (t), \bar y(t), \bar z_1(t),\bar z_2(t), \bar\Lambda(t,\cdot), \bar
u(t) ), \quad \beta =  f, l,
\\
& g^u (t,e) = g( t, x^u (t-),u (t),e ),\\
& \bar g (t,e) = g( t, \bar x (t-), \bar
u(t),e ).
\end{aligned}
\right.
\end{eqnarray}

\begin{lemma}\label{lem4}
 Assume that Assumptions \ref{ass:1.1} and \ref{ass:1.2} hold.
By using the short-hand notations \eqref{eq:19} and
\eqref{eq:3.16},  it follows that
\begin{eqnarray}\label{eq:21}
&&J (u (\cdot)) - J (\bar u(\cdot))\nonumber
\\ &=& {\mathbb E} ^{\bar u}
\bigg[\int_0^T \bigg [ { \cal H}(t, x^u(t),
y^u(t), z^u_1(t), z^u_2(t), \Lambda^u(t,\cdot),  u(t)) - {\cal H} (t, \bar x(t),
\bar y(t), \bar z_1(t),\bar z_2(t), \bar\Lambda(t,\cdot),  \bar u(t)) \nonumber \\&& - \big < {\cal H}_x (t, \bar x(t),
\bar y(t), \bar z_1(t), \bar z_2(t), \bar\Lambda(t,\cdot),  \bar u(t)),x^u (t) - \bar
x (t)
\big>\nonumber
\\&&- \big <{ \cal H}_y (t, \bar x(t),
\bar y(t), \bar z_1(t), \bar z_2(t), \bar\Lambda(t,\cdot),  \bar u(t)),
y^u (t) - \bar
y (t)\big>\nonumber
\\&&- \big <{\cal H}_{z_1} (t, \bar x(t),
\bar y(t), \bar z_1(t),\bar z_2(t), \bar\Lambda(t,\cdot),  \bar u(t)),
 z^u_1 (t) - \bar
z_1 (t)\big>\nonumber
\\&&- \big <{\cal H}_{z_2} (t, \bar x(t),
\bar y(t), \bar z_1(t),\bar z_2(t), \bar\Lambda(t,\cdot),  \bar u(t)),
 z^u_2 (t) - \bar
z_2 (t)\big>
\nonumber
\\&&- \int_{E}\big <{\cal H}_{\Lambda} (t, \bar x(t),
\bar y(t), \bar z_1(t),\bar z_2(t), \bar\Lambda(t,\cdot),  \bar u(t)),
 \Lambda^u_2 (t,e) - \bar
\Lambda_2 (t,e)\big>\nu(de)
\nonumber
\\&&-
 \langle (\sigma_2^u(t)-\bar{\sigma}_2(t))
 (h^u(t)-\bar
 h(t)), \bar p(t)\rangle \nonumber
 \\&& -
 \langle (z_2^u(t)-\bar{z}_2(t))
 (h^u(t)-\bar
 h(t)), \bar k(t)\rangle dt\bigg]
\nonumber \\
&& + {\mathbb E}^{\bar u} \big [ \Phi^u(T)
 - \bar \Phi (T) - \left <
x^u (T) - \bar x (T), \bar \Phi_x (T)  \right
> \big ]
\nonumber \\
&& - {\mathbb E}^{\bar u}
\big [ \la \phi^u (T) - \bar \phi (T), \bar k(T)\ra - \left <
\bar \phi_x^*(T)\bar k(T),
x^u (T) - \bar x (T) \right
> \big ]
\nonumber \\
&& + {\mathbb E}
 \big [ \gamma^u(0) - \bar \gamma (0) - \left <
y^u (0)
- \bar y (0), \bar \gamma_y (0)  \right
> \big ]
\nonumber\nonumber
\\&&+\mathbb E\bigg[\int_0^T \bar R_2(t)(\rho^u(t)-\rho^{\bar u}(t))(h^u(t)-\bar
 h(t))dt\bigg] \nonumber
\\&&+\mathbb E\bigg[\int_0^T(l^u(t)-\bar l(t))( \rho^u(t)-\bar
\rho(t))dt\bigg]\nonumber
\\&&+\mathbb E \bigg[ ( \rho^u (T) - \bar \rho(T)) (\Phi^u (T) -
\bar \Phi (T))\bigg].
\end{eqnarray}
\end{lemma}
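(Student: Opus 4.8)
The plan is to compute $J(u(\cdot))-J(\bar u(\cdot))$ directly, using the representation \eqref{eq:15} of the cost functional in the strong formulation on $(\Omega,\mathscr F,\mathbb P)$, and then to re-express the resulting expectations under $\mathbb P^{\bar u}$ via the density $\bar\rho(\cdot)$. First I would split the difference into the three natural pieces coming from \eqref{eq:15}: the running-cost term $\mathbb E\big[\int_0^T(\rho^u l^u-\bar\rho\,\bar l)\,dt\big]$, the terminal term $\mathbb E\big[\rho^u\Phi^u(T)-\bar\rho\,\bar\Phi(T)\big]$, and the initial term $\mathbb E\big[\gamma^u(0)-\bar\gamma(0)\big]$. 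In each running-cost and terminal piece I would insert and subtract a mixed term so as to separate the "control variation" part (with density fixed at $\bar\rho$) from the "density variation" part: e.g. $\rho^u l^u-\bar\rho\,\bar l=\bar\rho(l^u-\bar l)+(\rho^u-\bar\rho)l^u$, and then rewrite $(\rho^u-\bar\rho)l^u=(\rho^u-\bar\rho)(l^u-\bar l)+(\rho^u-\bar\rho)\bar l$. The terms carrying the factor $\bar\rho$ convert to $\mathbb E^{\bar u}[\cdot]$ by the definition $d\mathbb P^{\bar u}=\bar\rho(T)\,d\mathbb P$ (together with the martingale property of $\bar\rho$ for the running integral); the remaining terms $\mathbb E\big[\int_0^T(l^u-\bar l)(\rho^u-\bar\rho)\,dt\big]$ and $\mathbb E\big[(\rho^u-\bar\rho)(\Phi^u(T)-\bar\Phi(T))\big]$ are exactly the explicit "remainder" lines appearing at the end of \eqref{eq:21}, so they are simply carried along untouched.

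The heart of the argument is then to show that the $\mathbb E^{\bar u}$-part of $J(u)-J(\bar u)$, namely
$\mathbb E^{\bar u}\big[\int_0^T(l^u-\bar l)\,dt+\Phi^u(T)-\bar\Phi(T)+\gamma^u(0)-\bar\gamma(0)\big]$,
equals the right-hand side of \eqref{eq:21}. To do this I would apply Itô's formula, under $\mathbb P^{\bar u}$ (where $(W(\cdot),W^{\bar u}(\cdot))$ is a standard Brownian motion and $\tilde\mu$ keeps its compensator), to the pairings
$\langle \bar p(t),\,x^u(t)-\bar x(t)\rangle$,
$\langle \bar k(t),\,y^u(t)-\bar y(t)\rangle$,
and to $\bar r(\cdot)$ itself, between $t=0$ and $t=T$. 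Using the dynamics of $x^u-\bar x$ and $y^u-\bar y$ obtained by subtracting the corresponding equations in \eqref{eq:3.7}, the dynamics of $\bar p,\bar k,\bar r$ from the adjoint system \eqref{eq:18}, and the definition \eqref{eq6}--\eqref{eq:19} of $\mathcal H$ and its derivatives, the drift terms reorganize: the $\mathcal H_x$, $\mathcal H_y$, $\mathcal H_{z_1}$, $\mathcal H_{z_2}$, $\mathcal H_\Lambda$ contributions produce exactly the linear ("first-order") correction terms subtracted inside the bracket in \eqref{eq:21}, the boundary terms at $t=0,T$ produce the $\Phi$, $\phi$-with-$\bar k(T)$, and $\gamma$ lines (using $\bar p(T)=\Phi_x(\bar x(T))-\phi_x^*(\bar x(T))\bar k(T)$, $\bar r(T)=\bar\Phi(T)$, $\bar k(0)=-\bar\gamma_y(\bar y(0))$, and $y^u(T)-\bar y(T)=\phi^u(T)-\bar\phi(T)$), and the $\mathcal H$-itself terms recombine into $\mathcal H(t,x^u,\dots)-\mathcal H(t,\bar x,\dots)$ after accounting for $l^u-\bar l=\mathcal H^u-\bar{\mathcal H}-(\text{the }b,\sigma_1,\sigma_2,g,f,h\text{-pairings})$. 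I should be careful that the last slot of $\mathcal H$ in \eqref{eq:19} is shifted by $-\bar\sigma_2^\top\bar p-\bar z_2^\top\bar k$; this is precisely what makes the cross terms $\langle(\sigma_2^u-\bar\sigma_2)(h^u-\bar h),\bar p\rangle$ and $\langle(z_2^u-\bar z_2)(h^u-\bar h),\bar k\rangle$ (coming from the $\sigma_2\,dY=\sigma_2(dW^{\bar u}+(h^u-\bar h)\,dt)$ rewriting when passing between the $Y$-driven and $W^{\bar u}$-driven forms) appear with the signs shown, rather than cancel. Finally, applying Itô's formula to $\bar R_2(t)(\rho^u(t)-\bar\rho(t))$ — or equivalently tracking the $\bar R_2$ term through the $\rho$-equation $d\rho^u=\rho^u h^u\,dY$ — yields the $\mathbb E\big[\int_0^T \bar R_2(t)(\rho^u-\bar\rho)(h^u-\bar h)\,dt\big]$ line.

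The expectations of all stochastic integrals must vanish, which is where the integrability hypotheses are used: by Lemma \ref{lem:3.3}, Lemma \ref{lem:3.4}, and the assumed regularity of $(\bar p,\bar q_i,\bar k,\bar r,\bar R_i)$ in the stated $S^4$/$M^4$ spaces, together with Hölder and the Burkholder--Davis--Gundy inequality, every martingale term (including the Poisson-integral ones, whose BDG constant involves the $L^4$-norm in $e$, accounting for the $L^4$ bound on $u(\cdot)$) is a true martingale with zero mean. I expect the main obstacle to be purely bookkeeping: carefully matching the many Hamiltonian-derivative slots and the argument shift in \eqref{eq:19} so that the $\sigma_2$/$z_2$/$h$ cross terms and the $\bar R_2$ term come out with exactly the coefficients displayed in \eqref{eq:21}, and making sure that the jump terms (the $\Lambda$ and $g$ contributions, and $\mathcal H_\Lambda$ evaluated with the compensator $\nu(de)\,dt$ rather than $\tilde\mu(dt,de)$) are handled consistently. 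No linearization of the state or cost is needed — the identity is exact — so there is no estimation step beyond justifying the vanishing of the martingale parts.
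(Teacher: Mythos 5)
Your plan reproduces the paper's own argument essentially step for step: the same Bayes-type splitting of $J(u(\cdot))-J(\bar u(\cdot))$ with the quadratic remainder terms $\mathbb E\big[\int_0^T(l^u-\bar l)(\rho^u-\bar\rho)dt\big]$ and $\mathbb E\big[(\rho^u(T)-\bar\rho(T))(\Phi^u(T)-\bar\Phi(T))\big]$ set aside, It\^o's formula under $\mathbb P^{\bar u}$ applied to $\langle\bar p(t),x^u(t)-\bar x(t)\rangle+\langle\bar k(t),y^u(t)-\bar y(t)\rangle$ with the shifted last slot of $\mathcal H$ in \eqref{eq:19} producing the $\sigma_2$/$z_2$ cross terms, and the martingale parts killed by the $S^4/M^4$ estimates of Lemmas \ref{lem:3.3}--\ref{lem:3.4}. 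The only correction is that the final It\^o computation should be applied to the semimartingale product $(\rho^u(t)-\bar\rho(t))\,\bar r(t)$ rather than to $\bar R_2(t)(\rho^u(t)-\bar\rho(t))$ (the process $\bar R_2$ is only a BSDE integrand, not a semimartingale); its cross-variation, together with the leftover terms $\mathbb E\big[\int_0^T(\rho^u-\bar\rho)\bar l\,dt\big]$, $\mathbb E\big[(\rho^u(T)-\bar\rho(T))\bar\Phi(T)\big]$ and the $-\mathbb E^{\bar u}\big[\int_0^T\langle\bar R_2(t),h^u(t)-\bar h(t)\rangle dt\big]$ piece coming from the Hamiltonian pairing, yields exactly the line $\mathbb E\big[\int_0^T\bar R_2(t)(\rho^u(t)-\bar\rho(t))(h^u(t)-\bar h(t))dt\big]$ in \eqref{eq:21}.
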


\begin{proof}
  In view of \eqref{eq:2}, we can easily check that $(x^u(\cdot), y^u(\cdot), z^u_1(\cdot), z^u_2(\cdot), \bar \Lambda(\cdot))$ admits the following FBSDE:
\begin{eqnarray}
\left\{
\begin{aligned}
dx(t)=&\big[b^{u}(t)+\sigma _2^u(t)(\bar  h(t)-h^u(t))\big]dt+
\sigma^u_1(t) dW(t)+\sigma _2^u(t) dW^{\bar u}(t)+\int_{E}g^u(t,e) \tilde \mu(dt,de)
\\
dy(t)=&\big[f^u(t)+z _2(t)( \bar h(t)- h^u(t))\big]dt+ z_1(t) dW(t)+  z_2(t) dW^{\bar u}(t)+\int_{E}\Lambda (t,e) \tilde \mu(dt,de)\\
x(0)=&x,\\
y(T)=&\phi(x(T)).
\end{aligned}
\right.
\end{eqnarray}
Therefore, we have that $(x^u(t)-\bar x(t),
y^u(t)-\bar y(t), z^u(t)-\bar z(t),
\Lambda^u(t,e)-\bar \Lambda(t,e))$ 
 admits the following FBSDE:

\begin{eqnarray}
\left\{
\begin{aligned}
dx(t)-\bar x(t)=&\big[b^u(t)-\bar b(t)+\sigma _2^u(t)( h^{\bar u}(t)-h^u(t))\big]dt+ [\sigma^u_1(t)-\bar\sigma_1(t)] dW(t)+[\sigma _2^u(t)-\bar\sigma _2(t)]dW^{\bar u}(t)\\&+\int_{E}[g^u (t,e)-\bar g(t,e)]\tilde \mu(dt,de)
\\
dy(t)-\bar y(t)=&\big[f^u(t)-\bar f(t) +z _2(t)( \bar h(t)-h(t))\big]dt+ [z_1(t)-\bar z_1(t)] dW(t)+  [z_2(t)-\bar z_2(t)] dW^{\bar u}(t)\\&
+\int_{E}[\Lambda (t,e)-\bar \Lambda(t,e)] \tilde \mu(dt,de)\\
x(0)-\bar x(0)=&0,\\
y(T)-\bar y(T)=&\phi(x(T))-\phi(\bar x(T)).
\end{aligned}
\right.
\end{eqnarray}

By \eqref{eq:18}, we know that
$(\bar p(\cdot),\bar q_1(\cdot),\bar q_2(\cdot), \bar q_3(\cdot,\cdot),\bar k(\cdot) )$ admits the
following   FBSDE

\begin{numcases}{}\label{eq:3.3}
\begin{split}
d\bar p\left(  t\right)
 &=-{\cal {H}}_{x}\left( t,\bar x(t),
 \bar y(t), \bar z_1(t),
 \bar z_2(t), \bar\Lambda(t,\cdot),  \bar u(t) \right)dt
 +\bar q_1
\left(  t\right)  dW\left(  t\right)  +{\bar q}_{2}\left( t\right) dW^{
\bar u}\left( t\right)
+\int_{E}\bar q_3 (t,e) \tilde \mu(dt,de),
\\
d\bar k\left(  t\right)  &=-{\cal {H}}_{y}\left(  t,\bar x(t),
 \bar y(t), \bar z_1(t),
 \bar z_2(t), \bar\Lambda(t,\cdot),  \bar u(t) \right)dt-
{\cal {H}}_{z_1}\left(  t,\bar x(t),
 \bar y(t), \bar z_1(t),
 \bar z_2(t), \bar\Lambda(t,\cdot),  \bar u(t) \right)  dW\left(  t\right)
  \\&\quad\quad-{\cal {H}}_{z_2}\left(  t,\bar x(t),
 \bar y(t), \bar z_1(t),
 \bar z_2(t), \bar\Lambda(t,\cdot),  \bar u(t) \right)dW^{
\bar u}\left( t\right)-
\int_{E}{\cal {H}}_{\Lambda}\left(  t,\bar x(t),
 \bar y(t), \bar z_1(t),
 \bar z_2(t), \bar\Lambda(t,\cdot),  \bar u(t) \right)\tilde \mu(dt,de),
\\ \bar p(T)&=\bar\Phi_x(T)-\bar \phi_x^\top(T)\bar k(T),
\\ \bar k(0)&=-\bar \gamma_y(0).
\end{split}
\end{numcases}
Moreover, we can easily obtain that $(\bar \Lambda(\cdot),
\bar R_1(\cdot),
\bar R_2(\cdot), \bar R_3(\cdot))$  admits the following BSDE

\begin{numcases}{}\label{eq:3.3}
\begin{split}
d\bar r(t)&=-[\bar l(t)+\bar R_2(t)\bar h(t)]dt+\bar R_1\left(
t\right) dW\left(  t\right) +{\bar R}_{2}\left( t\right) dY(t)+\int_E{\bar R}_{3}\left( t,e\right) \tilde \mu(dt,de),
\\ \bar r(T)&=\bar\Phi(T),
\end{split}
\end{numcases}
  In view of the definition of the cost function $J(u(\cdot)),$
it follows that
\begin{eqnarray}\label{eq:26}
\begin{split}
 J(u(\cdot))-J(\bar u(\cdot))=&
 \mathbb E^u\bigg[\int_0^Tl^u(t)dt+ \Phi^u(T)+\gamma^u(0)\bigg]-\mathbb E^{\bar u}\bigg[\int_0^T\bar l(t)dt+ \bar\Phi(T)+\bar\gamma(0)\bigg]
 \\=& \mathbb E\bigg[\int_0^T (\rho^u(t)l^u(t)-\bar\rho(t)\bar l(t))dt\bigg]+ \mathbb E[\rho^u(T)\Phi^u(T)-\bar \rho (t)\bar \Phi(T)]
 +\mathbb E[\gamma^u(0)-\bar \gamma(0)]
 \\=& \mathbb E^{\bar u}
 \bigg[\int_0^T [l^u(t)-\bar l(t)]dt
 \bigg]+ \mathbb E^{\bar u}[\Phi^u(T)-\bar \Phi(T)]+ \mathbb E\bigg[\int_0^T (\rho^u(t)-\bar\rho(t)) l^u(t)dt\bigg]
\\&+ \mathbb E[(\rho^u(T)-\bar \rho (T))\Phi^u(T)]
 +\mathbb E[\gamma^u(0)-\bar \gamma(0)]
\end{split}
\end{eqnarray}
In view of   the definition of $\cal H,$ it
follows that
\begin{eqnarray}\label{eq:27}
  \begin{split}
   \mathbb E^{\bar u}\bigg[\int_0^T (l^u(t)-\bar l(t))dt\bigg]=&{\mathbb E} ^{\bar u}
\bigg[\int_0^T \bigg ( { \cal H}(t, x^u(t),
y^u(t), z^u_1(t), z^u_2(t), \Lambda^u(t,\cdot),  u(t)) - {\cal H} (t, \bar x(t),
\bar y(t), \bar z_1(t),\bar z_2(t), \bar\Lambda(t,\cdot),  \bar u(t))\bigg)dt\bigg]
\\&- \mathbb E^{\bar u}\bigg[\int_0^T \bigg (\langle \bar p(t), b^u (t) - \bar b (t)\rangle + \langle \bar q_1(t), \sigma_1^u(t) - \bar \sigma_1 (t)\rangle
+\langle \bar  q_2(t), \sigma _2^u(t) - {\bar \sigma}_2 (t)
\rangle
+ \langle \bar k(t),  f^u(t) - \bar f (t)\rangle\\&+ \int_E\langle \bar q_3(t,e),  g^u(t,e) - \bar g (t,e)\rangle\nu(de)+\langle \bar R_2(t)-\bar \sigma_2^*(t)\bar p(t)-\bar z_2 ^*(t)\bar k(t),h^u(t)-\bar h(t)\rangle \bigg)dt\bigg]
  \end{split}
\end{eqnarray}

By using It\^{o} formula to $\la \bar p(t), x^u(t)-\bar x(t)\ra
+\la \bar k(t), y^u(t)-\bar y(t)\ra$ and taking expectation under $P^{\bar u}$, 
it follows that
\begin{eqnarray}
   && \mathbb E^{\bar u} \big[ \la \bar \Phi_x(T)-\bar\phi_x^*(T)\bar k(T), x^u(T)-\bar x(T)\ra\big]+\mathbb E^{\bar u} \big[ \la \bar k(T), \phi^u(T)-\bar \phi(T)\ra\big]\nonumber
\\=&&\mathbb E^{\bar u}\bigg[\int_0^T \bigg (\langle \bar p(t), b^u (t) - \bar b (t)\rangle + \langle \bar q_1(t), \sigma_1^u (t) - \bar \sigma_1 (t)\rangle
+ \langle \bar q_2(t), \sigma _2^u(t) - {\bar \sigma}_2 (t)\rangle+ \int_E\langle \bar q_3(t,e),  g^u(t,e) - \bar g (t,e)\rangle\nu(de)
\nonumber
\\&&+ \langle \bar k(t), f^u(t) - \bar f (t)\rangle
+\langle \bar p(t), \sigma_2^u(t)(\bar h(t)- h^u(t))\rangle +\langle \bar k(t), z_2^u(t)(\bar h(t)- h^u(t))\rangle \bigg)dt
\bigg]
\nonumber
\\&&-\mathbb E^{\bar u} \bigg[\int_0^T \la { \cal H}_x (t, \bar x(t),
\bar y(t), \bar z_1(t),\bar z_2(t), \bar\Lambda(t,\cdot),  \bar u(t)), x^u (t) - \bar
x (t)\ra dt\bigg]
\nonumber
\\&&-\mathbb E^{\bar u} \bigg[\int_0^T \la { \cal H}_y (t, \bar x(t),
\bar y(t), \bar z_1(t),\bar z_2(t), \bar\Lambda(t,\cdot),  \bar u(t)), y^u (t) - \bar
y (t)\ra dt\bigg]
\nonumber
\\&&-\mathbb E^{\bar u} \bigg[\int_0^T \la { \cal H}_{z_1}(t, \bar x(t),
\bar y(t), \bar z_1(t),\bar z_2(t), \bar\Lambda(t,\cdot),  \bar u(t)), z_1^u (t) - \bar
z_1 (t)\ra dt\bigg]
\nonumber
\\&&-\mathbb E^{\bar u} \bigg[\int_0^T \la { \cal H}_{z_2}(t, \bar x(t),
\bar y(t), \bar z_1(t),\bar z_2(t), \bar\Lambda(t,\cdot),  \bar u(t)), z_2^u (t) - \bar
z_2 (t)\ra dt\bigg]
\\&&-\mathbb E^{\bar u} \bigg[\int_0^T \int_{E}\big <{\cal H}_{\Lambda} (t, \bar x(t),
\bar y(t), \bar z_1(t),\bar z_2(t), \bar\Lambda(t,\cdot),  \bar u(t)),
 \Lambda^u_2 (t,e) - \bar
\Lambda_2 (t,e)\big>\nu(de)dt\bigg]
\nonumber
\\&&-{\mathbb E}
 \big [  \left <
y^u (0)
- \bar y (0), \bar \gamma_y (0)  \right
> \big ]
\bigg].
\end{eqnarray}
Therefore, we have
\begin{eqnarray} \label{eq:29}
   &&\mathbb E^{\bar u}\bigg[\int_0^T \bigg (\langle \bar p(t), b^u (t) - \bar b (t)\rangle + \langle \bar q_1(t), \sigma_1^u (t) - \bar \sigma_1 (t)\rangle
+ \langle \bar q_2(t), \sigma _2^u(t) - {\bar \sigma}_2 (t)\rangle+ \langle \bar k(t), f^u(t) - \bar f (t)\rangle \bigg)dt\bigg]\nonumber
\\=&& \mathbb E^{\bar u} \big[ \la \bar \Phi_x( T), x^u(T)-\bar x(T)\ra\big]+\mathbb E^{\bar u} \big[ \la \bar k(T), \phi^u(T)-\bar \phi(T)-\bar\phi_x(T)(x^u(T)-\bar x(T))\ra\big]\nonumber
\\&&+ \mathbb E\bigg[\int_0^T\bigg(\langle \bar p(t), \sigma_2^u(t)(h^u(t)-\bar h(t))\rangle +\langle \bar k(t), z_2^u(t)(h^u(t)-\bar h(t))\rangle \bigg)dt\bigg]\nonumber
\\&&+\mathbb E^{\bar u} \bigg[\int_0^T \la { \cal H}_x (t, \bar x(t),
\bar y(t), \bar z_1(t),\bar z_2(t), \bar\Lambda(t,\cdot),  \bar u(t)), x^u (t) - \bar
x (t)\ra dt\bigg]\nonumber
\\&&+\mathbb E^{\bar u} \bigg[\int_0^T \la { \cal H}_y (t, \bar x(t),
\bar y(t), \bar z_1(t),\bar z_2(t), \bar\Lambda(t,\cdot),  \bar u(t)), y^u (t) - \bar
y (t)\ra dt\bigg]\nonumber
\\&&+\mathbb E^{\bar u} \bigg[\int_0^T \la { \cal H}_{z_1}(t, \bar x(t),
\bar y(t), \bar z_1(t),\bar z_2(t), \bar\Lambda(t,\cdot),  \bar u(t)), z_1^u (t) - \bar
z_1 (t)\ra dt\bigg] \nonumber
\\&&+\mathbb E^{\bar u} \bigg[\int_0^T \la { \cal H}_{z_2}(t, \bar x(t),
\bar y(t), \bar z_1(t),\bar z_2(t), \bar\Lambda(t,\cdot),  \bar u(t)), z_2^u (t) - \bar
z_2 (t)\ra dt\bigg]\nonumber
\\&&+\mathbb E^{\bar u} \bigg[\int_0^T\int_E \la { \cal H}_{\Lambda}(t, \bar x(t),
\bar y(t), \bar z_1(t),\bar z_2(t), \bar\Lambda(t,\cdot),  \bar u(t)), \Lambda^u (t) - \bar
\Lambda (t)\ra \nu(de) dt\bigg]
\nonumber
\\&&+{\mathbb E}
 \big [  \left <
y^u (0)
- \bar y (0), \bar \gamma_y (0)  \right
> \big ]
\bigg].
\end{eqnarray}
Again using It\^{o} formula to
$(\rho^u(t)-\bar \rho(t)) \bar r(t), $
we get that
\begin{eqnarray}
  \begin{split}
    \mathbb E [ (\rho^u(T)-\bar\rho(T))\bar \Phi(T)]= &-\mathbb E\bigg[\int_0^T(\rho^u(t)-\bar\rho(t))(\bar l(t)+\bar R_2(t)\bar h(t))dt\bigg]
    +\mathbb E\bigg[\int_0^T\bar R_2(t)(\rho^u(t)h^u(t)-\bar\rho(t)\bar  h(t))dt\bigg].
  \end{split}
\end{eqnarray}
Therefore, we have
\begin{eqnarray}\label{eq:31}
  \begin{split}
    \mathbb E  [(\rho^u(T)-\bar\rho(T))\bar \Phi(T)]+\mathbb E\bigg[\int_0^T(\rho^u(t)
    -\bar\rho(t))\bar l(t)dt\bigg]=
    \mathbb E\bigg[\int_0^T\bar R_2(t)\rho^u(t)(h^u(t)-\bar  h(t))dt\bigg].
  \end{split}
\end{eqnarray}
By inserting \eqref{eq:29} into \eqref{eq:27}, it follows that
\begin{eqnarray}\label{eq:32}
  \begin{split}
   \mathbb E^{\bar u}
   \bigg[\int_0^T
    (l^u(t)-\bar l(t))dt\bigg]={\mathbb E} ^{\bar u}
\bigg[\int_0^T \bigg ( &{ \cal H}(t, x^u(t),
y^u(t), z^u_1(t), z^u_2(t), \Lambda^u(t,\cdot),  u(t)) - {\cal H} (t, \bar x(t),
\bar y(t), \bar z_1(t),\bar z_2(t), \bar\Lambda(t,\cdot),  \bar u(t))
\\& - \big < {\cal H}_x (t, \bar x(t),
\bar y(t), \bar z_1(t), \bar z_2(t), \bar\Lambda(t,\cdot),  \bar u(t)),x^u (t) - \bar
x (t)
\big>
\\&- \big <{ \cal H}_y (t, \bar x(t),
\bar y(t), \bar z_1(t), \bar z_2(t), \bar\Lambda(t,\cdot),  \bar u(t)),
y^u (t) - \bar
y (t)\big>
\\&- \big <{\cal H}_{z_1} (t, \bar x(t),
\bar y(t), \bar z_1(t),\bar z_2(t), \bar\Lambda(t,\cdot),  \bar u(t)),
 z^u_1 (t) - \bar
z_1 (t)\big>
\\&- \big <{\cal H}_{z_2} (t, \bar x(t),
\bar y(t), \bar z_1(t),\bar z_2(t), \bar\Lambda(t,\cdot),  \bar u(t)),
 z^u_2 (t) - \bar
z_2 (t)\big>
\\&- \int_{E}\big <{\cal H}_{\Lambda} (t, \bar x(t),
\bar y(t), \bar z_1(t),\bar z_2(t), \bar\Lambda(t,\cdot),  \bar u(t)),
 \Lambda^u_2 (t) - \bar
\Lambda_2 (t)\big>\nu(de)
\\&-
 \langle (\sigma_2^u(t)-\bar{\sigma}_2(t))
 (h^u(t)-\bar
 h(t)), \bar p(t)\rangle \bigg)dt\bigg] \\
& - {\mathbb E}^u \big [   \left <
x^u (T) - \bar x (T), \bar \Phi_x (T)  \right
> \big ]- {\mathbb E}
 \big [  \left <
y^u (0)
- \bar y (0), \bar \gamma_y (0)  \right
> \big ]
\\& - {\mathbb E}^u
\big [ \la \phi^u (T) - \bar \phi (T), k^u(T)\ra - \left <
\bar \phi_x^*(T)\bar k(T),
x^u (T) - \bar x (T) \right
> \big ].
  \end{split}
  \end{eqnarray}
Therefore, after we inserting\eqref{eq:31} and
 \eqref{eq:32} into \eqref{eq:26} ,  \eqref{eq:21} follows. The proof is complete.
 \end{proof}

For any given admissible controls $u (\cdot) \in {\cal A},$  because the control domain $U$ is convex, in $\cal A$ we can define
the following perturbed control process $u^\epsilon (\cdot)$:
\begin{eqnarray*}
u^\epsilon (\cdot) := \bar u (\cdot) + \epsilon ( u (\cdot) - \bar u (\cdot) ) , \quad  0 \leq \epsilon \leq 1.
\end{eqnarray*}
 Assume that $(\bar x (\cdot),
\bar y (\cdot),
\bar z_1 (\cdot),\bar z_2(\cdot),\bar \Lambda(\cdot),\bar \rho(\cdot))$ and
$(x^\epsilon (\cdot), y^\epsilon (\cdot), z^\epsilon_1 (\cdot), z^\eps_2(\cdot),
 \Lambda^\eps_(\cdot), \rho^\eps(\cdot))$ are the corresponding state processes corresponding to $\bar u (\cdot)$ and $u^\epsilon (\cdot)$, respectively. Assume that $(\bar p(\cdot),\bar q_1(\cdot), \bar { q}_2(\cdot), \bar q_3(\cdot),\bar k(\cdot),\bar \Lambda(\cdot),\bar R_1(\cdot),\bar R_2(\cdot ), \bar R_3(\cdot) )$ is the adjoint process
associated with the admissible pair $(\bar u (\cdot); \bar x (\cdot),
\bar y(\cdot), \bar z_1(\cdot),
 \bar z_2(\cdot),\bar \Lambda (\cdot),
 \bar \rho(\cdot))$.

\begin{lemma} \label{lem:3.5}
  Assume that Assumptions \ref{ass:1.1}
   and \ref{ass:1.2} hold. Then  
   it follows that
\begin{eqnarray*}
\begin{split}
&{\mathbb E} \bigg [ \sup_{t\in \cal T} | x^\epsilon (t) - \bar
x (t) |^4  \bigg ]
+{\mathbb E} \bigg [ \sup_{t\in \cal T} | y^\epsilon (t) - \bar
y (t) |^4  \bigg ]
+{\mathbb E} \bigg [
\bigg(\int_0^T| z^\epsilon_1 (t) - \bar
z _1(t) |^2 dt\bigg)^{2} \bigg ]
+{\mathbb E} \bigg [
\bigg(\int_0^T| z^\epsilon_2 (t) - \bar
z _2(t) |^2 dt\bigg)^{2} \bigg ]
\\&+{\mathbb E} \bigg [
\bigg(\int_0^T\int_{E}| z\Lambda^\epsilon (t,e) - \bar \Lambda(t,e) |^2 \nu(de)dt\bigg)^{2} \bigg ]= O (\epsilon^4) 
\end{split}
\end{eqnarray*}
and
\begin{eqnarray*}
&&{\mathbb E} \bigg [ \sup_{t\in \cal T} | \rho^\epsilon (t) - \bar
\rho (t) |^2  \bigg ]
= O (\epsilon^2) \ .
\end{eqnarray*}
\end{lemma}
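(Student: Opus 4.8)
The plan is to deduce both estimates directly from the stability results already available, namely estimate \eqref{eq:1.15} of Lemma \ref{lem:3.3} for the forward-backward component and the perturbation estimate of Lemma \ref{lem:3.4} for the density process; no fresh Gronwall-type argument is needed. First I would record that, since $u^\epsilon(\cdot)=(1-\epsilon)\bar u(\cdot)+\epsilon u(\cdot)$ with $0\le\epsilon\le1$ and $U$ is convex, the process $u^\epsilon(\cdot)$ is again ${\mathscr P}^Y$-measurable and $U$-valued, while
\begin{equation*}
\mathbb E\bigg[\int_0^T|u^\epsilon(t)|^4\,dt\bigg]\le K\,\mathbb E\bigg[\int_0^T|\bar u(t)|^4\,dt\bigg]+K\,\mathbb E\bigg[\int_0^T|u(t)-\bar u(t)|^4\,dt\bigg]<\infty,
\end{equation*}
so $u^\epsilon(\cdot)\in{\cal A}$ and Lemmas \ref{lem:3.3} and \ref{lem:3.4} legitimately apply to the pair $(\bar u(\cdot),u^\epsilon(\cdot))$ and to $(\bar\rho(\cdot),\rho^\epsilon(\cdot))$ respectively.

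The key point is the exact homogeneity $u^\epsilon(t)-\bar u(t)=\epsilon\big(u(t)-\bar u(t)\big)$, which gives
\begin{equation*}
\mathbb E\bigg[\int_0^T|u^\epsilon(t)-\bar u(t)|^4\,dt\bigg]=\epsilon^4\,\mathbb E\bigg[\int_0^T|u(t)-\bar u(t)|^4\,dt\bigg],\qquad \mathbb E\bigg[\int_0^T|u^\epsilon(t)-\bar u(t)|^2\,dt\bigg]=\epsilon^2\,\mathbb E\bigg[\int_0^T|u(t)-\bar u(t)|^2\,dt\bigg].
\end{equation*}
Substituting the first identity into the right-hand side of \eqref{eq:1.15} of Lemma \ref{lem:3.3}, with $(x^u,y^u,z_1^u,z_2^u,\Lambda^u)$ taken to be $(x^\epsilon,y^\epsilon,z_1^\epsilon,z_2^\epsilon,\Lambda^\epsilon)$, bounds the entire left-hand side of the first assertion by $K\epsilon^4\,\mathbb E[\int_0^T|u(t)-\bar u(t)|^4\,dt]$; since admissibility of $u(\cdot)$ and $\bar u(\cdot)$ makes the last expectation a finite constant independent of $\epsilon$, this is $O(\epsilon^4)$. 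In the same way, inserting the second identity into the perturbation estimate of Lemma \ref{lem:3.4} yields
\begin{equation*}
\mathbb E\Big[\sup_{t\in{\cal T}}|\rho^\epsilon(t)-\bar\rho(t)|^2\Big]\le K\Big\{\epsilon^4\,\mathbb E\Big[\int_0^T|u(t)-\bar u(t)|^2\,dt\Big]^2\Big\}^{1/2}=K\epsilon^2\,\mathbb E\Big[\int_0^T|u(t)-\bar u(t)|^2\,dt\Big]=O(\epsilon^2).
\end{equation*}

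Thus the genuinely nontrivial work has already been done in Lemmas \ref{lem:3.3} and \ref{lem:3.4}, and what remains is only the bookkeeping above; there is no real obstacle. The one feature worth commenting on is why the $\rho$-component comes out as $O(\epsilon^2)$ rather than $O(\epsilon^4)$: this is forced by the square-root (and hence effectively $L^2$) dependence on the control perturbation recorded in Lemma \ref{lem:3.4}, which in turn reflects the bilinear structure of the term $\rho^u\,h(\cdot,x^u,u)$ in \eqref{eq:8}, so one should not expect to improve the exponent without a finer treatment of that equation. Beyond this, the only care required is to check that the relabelling between $(x^\epsilon,y^\epsilon,z_1^\epsilon,z_2^\epsilon,\Lambda^\epsilon,\rho^\epsilon)$ and the generic solution $(x^u,y^u,z_1^u,z_2^u,\Lambda^u,\rho^u)$ is consistent with how the perturbed state processes were defined, and that the jump integrals and the $dY$-driven terms in \eqref{eq:4} and \eqref{eq:8} are genuinely included in the cited estimates, which they are by construction.
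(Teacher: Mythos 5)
Your proposal is correct and follows exactly the route the paper takes: its proof of Lemma \ref{lem:3.5} consists precisely of invoking the stability estimates of Lemmas \ref{lem:3.3} and \ref{lem:3.4} applied to the pair $(\bar u(\cdot),u^\epsilon(\cdot))$, with the homogeneity $u^\epsilon-\bar u=\epsilon(u-\bar u)$ yielding the $\epsilon^4$ and $\epsilon^2$ rates. Your additional checks (admissibility of $u^\epsilon$ and finiteness of the control-difference moments) are sound bookkeeping that the paper leaves implicit.
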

\begin{proof}
  The proof can be obtained
  directly by Lemmas \ref{lem:3.3}
  and \ref{lem:3.4}.
\end{proof}

Now we are begin to apply  Lemma \ref{lem4} and Lemma \ref{lem:3.5} to get an variational formula for the cost functional
$J(u(\cdot))$  in view of  the Hamiltonian ${\cal H}$ and the adjoint process $(\bar p(\cdot),\bar q_1(\cdot), \bar { q}_2(\cdot), \bar q_3(\cdot),\bar k(\cdot),\bar \Lambda(\cdot),\bar R_1(\cdot),\bar R_2(\cdot ), \bar R_3(\cdot) )$.

\begin{theorem}\label{them:3.1}
 Assume that  Assumptions \ref{ass:1.1} and
 \ref{ass:1.2} holds.
Then for any admissible control $u (\cdot) \in {\cal A}$, an variational formula for the cost functional
$J(u(\cdot))$ is given by
\begin{eqnarray}\label{eq:4.4}
&& \frac{d}{d\epsilon} J ( \bar u (\cdot) + \epsilon ( u (\cdot) - \bar u (\cdot) ) ) |_{\epsilon=0} \nonumber \\
&& := \lim_{\epsilon \rightarrow 0^+}
\frac{ J ( \bar u (\cdot) + \epsilon ( u (\cdot) - \bar u (\cdot) ) )
- J( \bar u (\cdot) ) }{\epsilon} \nonumber \\
&& = {\mathbb E}^{\bar u} \bigg [ \int_0^T \left < {\cal H}_u (t, \bar x(t),
\bar y(t), \bar z_1(t),\bar z_2(t), \bar\Lambda(t,\cdot),  \bar u(t)),
u (t) -
 \bar u (t) \right > d t \bigg ] .
\end{eqnarray}
\end{theorem}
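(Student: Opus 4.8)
The plan is to substitute the convex perturbation $u^\epsilon(\cdot)=\bar u(\cdot)+\epsilon(u(\cdot)-\bar u(\cdot))$ into the exact difference identity \eqref{eq:21} of Lemma~\ref{lem4}, divide by $\epsilon$, and let $\epsilon\to0^+$. Write $x^\epsilon,y^\epsilon,z_1^\epsilon,z_2^\epsilon,\Lambda^\epsilon,\rho^\epsilon$ for the state processes attached to $u^\epsilon$ and set $\Delta x^\epsilon=x^\epsilon-\bar x,\dots,\Delta u^\epsilon=\epsilon(u-\bar u)$. Lemma~\ref{lem4} then presents $\epsilon^{-1}(J(u^\epsilon(\cdot))-J(\bar u(\cdot)))$ as a sum of three kinds of terms: (a) the running Hamiltonian term $\epsilon^{-1}\mathbb E^{\bar u}\!\int_0^T[\cH(t,x^\epsilon,\dots,u^\epsilon)-\cH(t,\bar x,\dots,\bar u)-\la\cH_x,\Delta x^\epsilon\ra-\la\cH_y,\Delta y^\epsilon\ra-\la\cH_{z_1},\Delta z_1^\epsilon\ra-\la\cH_{z_2},\Delta z_2^\epsilon\ra-\int_E\la\cH_\Lambda,\Delta\Lambda^\epsilon\ra\nu(de)]dt$, in which the first-order parts in $x,y,z_1,z_2,\Lambda$ are subtracted but the one in $u$ is not; (b) three terminal/initial remainders built from $\Phi,\phi,\gamma$; and (c) several ``cross'' terms, each a product of two first-order increments, such as $(\sigma_2^\epsilon-\bar\sigma_2)(h^\epsilon-\bar h)$, $(z_2^\epsilon-\bar z_2)(h^\epsilon-\bar h)$, $\bar R_2(\rho^\epsilon-\bar\rho)(h^\epsilon-\bar h)$, $(l^\epsilon-\bar l)(\rho^\epsilon-\bar\rho)$ and $(\rho^\epsilon-\bar\rho)(\Phi^\epsilon-\bar\Phi)$. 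The goal is to show that only the $u$-derivative of $\cH$ survives in the limit.

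For (a) I would use the first-order Taylor formula with integral remainder: by Assumptions~\ref{ass:1.1}--\ref{ass:1.2}, $\cH$ is continuously G\^ateaux differentiable in $(x,y,z_1,z_2,\Lambda(\cdot),u)$ with derivatives of at most linear growth, so
\[
\cH(t,x^\epsilon,\dots,u^\epsilon)-\cH(t,\bar x,\dots,\bar u)=\int_0^1\big[\la\cH_x(t,\bar x+\lambda\Delta x^\epsilon,\dots),\Delta x^\epsilon\ra+\cdots+\la\cH_u(t,\bar x+\lambda\Delta x^\epsilon,\dots),\Delta u^\epsilon\ra\big]d\lambda .
\]
Cancelling the frozen first-order terms and using $\Delta u^\epsilon=\epsilon(u-\bar u)$, term (a) equals $\mathbb E^{\bar u}\!\int_0^T\!\int_0^1\big\{\la\cH_x(t,\bar x+\lambda\Delta x^\epsilon,\dots)-\cH_x(t,\bar x,\dots,\bar u),\epsilon^{-1}\Delta x^\epsilon\ra+\cdots+\la\cH_u(t,\bar x+\lambda\Delta x^\epsilon,\dots),u-\bar u\ra\big\}d\lambda\,dt$. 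By Lemma~\ref{lem:3.5}, $\Delta x^\epsilon,\Delta y^\epsilon\to0$ in $S^4$ and $\Delta z_1^\epsilon,\Delta z_2^\epsilon,\Delta\Lambda^\epsilon\to0$ in $M^4(\Omega;L^2)$; hence, along any subsequence, the intermediate arguments converge a.e.\ to the barred ones, so the G\^ateaux derivatives evaluated there converge a.e.\ to their values at $(\bar x,\dots,\bar u)$. Using the linear growth of these derivatives, the uniform $L^4$-type bounds of Lemmas~\ref{lem:3.3}--\ref{lem:3.4} and of the adjoint system, and the uniform integrability of the increment families (automatic since they tend to $0$ in $L^1$), the relevant families are uniformly integrable, so a Vitali argument gives $L^2$-convergence of the derivative increments; a Cauchy--Schwarz pairing, which also absorbs the weight $\rho^{\bar u}(T)$ (bounded in every $L^p$ by Lemma~\ref{lem:3.4}), then forces the $\cH_x,\dots,\cH_\Lambda$-contributions to vanish and sends the $\cH_u$-contribution to $\mathbb E^{\bar u}[\int_0^T\la\cH_u(t,\bar x(t),\dots,\bar u(t)),u(t)-\bar u(t)\ra dt]$, i.e.\ the right-hand side of \eqref{eq:4.4}.

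The terminal and initial remainders in (b) are treated the same way; e.g.\ $\Phi^\epsilon(T)-\bar\Phi(T)-\la\Delta x^\epsilon(T),\bar\Phi_x(T)\ra=\int_0^1\la\Phi_x(\bar x(T)+\lambda\Delta x^\epsilon(T))-\Phi_x(\bar x(T)),\Delta x^\epsilon(T)\ra d\lambda$, and after division by $\epsilon$ the $L^4$-boundedness of $\epsilon^{-1}\Delta x^\epsilon(T)$ together with the a.e.\ vanishing and $L^4$-boundedness of the $\Phi_x$-increment make it $o(\epsilon)$; likewise for the $\phi$- and $\gamma$-remainders. Every cross term in (c) is a product of two increments, one of size $O(\epsilon)$ in $L^4$ (for the $x,u,\sigma_2,h,z_2,l,\Phi$ increments, by the Lipschitz/linear-growth bounds of Assumption~\ref{ass:1.1}, the $L^4$-estimates of Lemma~\ref{lem:3.3} and the $O(\epsilon^4)$-estimate of Lemma~\ref{lem:3.5}) or $O(\epsilon)$ in $L^2(\Omega;C({\cal T}))$ (for the $\rho$-increment, Lemma~\ref{lem:3.5}); by H\"older's inequality and the $L^4$-type bounds on the adjoint processes $\bar p,\bar k,\bar R_2$, each is $O(\epsilon^2)=o(\epsilon)$. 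Assembling (a), (b) and (c) yields \eqref{eq:4.4}.

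The hard part is the uniform-integrability step inside (a): since $z_1,z_2,\Lambda$ only belong to the $L^2$-in-time spaces $M^4(\Omega;L^2)$ (not to $L^4$-in-time), one cannot dominate $|\cH_{z_1}|^2$ and its analogues by a fixed integrable function, and must instead exploit that the relevant increment families converge to $0$ in $L^1$ and are therefore uniformly integrable. It is exactly here that the $L^4$-integrability imposed on admissible controls and the $L^4$ a priori estimates of Lemmas~\ref{lem:3.3}, \ref{lem:3.4} and \ref{lem:3.5} enter decisively; the remaining steps are routine H\"older bookkeeping.
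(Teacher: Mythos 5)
Your proposal is correct and follows essentially the same route as the paper: apply the exact difference formula of Lemma \ref{lem4} to $u^\epsilon=\bar u+\epsilon(u-\bar u)$, isolate the term $\epsilon\,\mathbb{E}^{\bar u}\big[\int_0^T\langle\mathcal H_u,\,u-\bar u\rangle\,dt\big]$, and show the remaining remainder is $o(\epsilon)$ via Taylor expansion, the estimates of Lemma \ref{lem:3.5} (states $O(\epsilon)$ in the $L^4$-type norms, $\rho$ in $L^2$), and a convergence theorem, with the cross terms handled as products of two $O(\epsilon)$ increments. The only difference is that where the paper simply invokes ``Taylor expansions, Lemma \ref{lem:3.5} and the dominated convergence theorem,'' you spell out a Vitali/uniform-integrability argument for the $z_1,z_2,\Lambda$ contributions, which is a more careful justification of the same step rather than a different method.
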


\begin{proof}
To simplify  our notations, we define
\begin{eqnarray}
\alpha^\epsilon &:=& {\mathbb E} ^{\bar u}
\bigg[\int_0^T \bigg [ { \cal H}(t, x^{u^\eps}(t),
y^{u^\eps}(t), z_1^{u^\eps}(t),z_2^{u^\eps}(t), \Lambda_2^{u^\eps}(t,\cdot),
u(t)) - {\cal H} (t, \bar x(t),
\bar y(t), \bar z_1(t),\bar z_2(t), \bar\Lambda(t,\cdot),  \bar u(t)) \nonumber \\&& - \big < {\cal H}_x (t, \bar x(t),
\bar y(t), \bar z_1(t),\bar z_2(t), \bar\Lambda(t,\cdot),  \bar u(t)),x^{u^\eps} (t) - \bar
x (t)
\big>- \big <{ \cal H}_y (t, \bar x(t),
\bar y(t), \bar z_1(t),
\bar z_2(t), \bar\Lambda(t,\cdot),  \bar u(t)),
y^{u^\eps} (t) - \bar
y (t)\big>\nonumber
\\&&- \big <{\cal H}_{z_1} (t, \bar x(t),
\bar y(t), \bar z_1(t),
\bar z_2(t), \bar\Lambda(t,\cdot),  \bar u(t)),
 z_1^{u^\eps} (t) - \bar
z_1 (t)\big>- \big <{\cal H}_{z_2} (t, \bar x(t),
\bar y(t), \bar z_1(t),\bar z_2(t), \bar\Lambda(t,\cdot),  \bar u(t)),
 z_2^{u^\eps} (t) - \bar
z_2(t)\big>\nonumber
\\&& - \big <{\cal H}_{u} (t, \bar x(t),
\bar y(t), \bar z_1(t),\bar z_2(t), \bar\Lambda(t,\cdot),  \bar u(t)),
 u^\eps (t) - \bar
u(t)\big>\nonumber
\\&&-\int_E \big <{\cal H}_{\Lambda} (t, \bar x(t),
\bar y(t), \bar z_1(t),\bar z_2(t), \bar\Lambda(t,\cdot),\bar u(t)),
 \Lambda^\eps (t,e) - \bar
\Lambda(t,e)\big>\nu(de)
 \\&&-\langle (\sigma_2^{u^\eps}(t)-\bar{\sigma}_2(t))
 (h^{u^\eps}(t)-\bar
 h(t)), \bar p(t)\rangle \nonumber
 \\&& -
 \langle (z_2^{u^\eps}(t)-\bar{z}_2(t))
 (h^{u^\eps}(t)-\bar
 h(t)), \bar k(t)\rangle dt\bigg]
\nonumber \\
&& + {\mathbb E}^{\bar u} \big [ \Phi^{u^\eps}(T)
 - \bar \Phi (T) - \left <
x^{u^\eps} (T) - \bar x (T), \bar \Phi_x (T)  \right
> \big ]
\nonumber \\
&& - {\mathbb E}^{\bar u}
\big [ \la \phi^{u^\eps} (T) - \bar \phi (T), \bar k(T)\ra - \left <
\bar \phi_x^*(T) \bar k(T),
x^{u^\eps} (T) - \bar x (T) \right
> \big ]
\nonumber \\
&& + {\mathbb E}
 \big [ \gamma^{u^\eps}(0) - \bar \gamma (0) - \left <
y^{u^\eps} (0)
- \bar y (0), \bar \gamma_y (0)  \right
> \big ]
\nonumber\nonumber
\\&&+\mathbb E\bigg[\int_0^T \bar R_2(t)(\rho^{u^\eps}(t)-\rho^{\bar u}(t))(h^{u^\eps}(t)-\bar
 h(t))dt\bigg] \nonumber
\\&&+\mathbb E\bigg[\int_0^T(l^{u^\eps}(t)-\bar l(t))( \rho^{u^\eps}(t)-\bar
\rho(t))dt\bigg]\nonumber
\\&&
+\mathbb E \bigg[ ( \rho^{u^\eps} (T) - \bar \rho(T)) (\Phi^{u^\eps} (T) -
\bar \Phi (T))\bigg].
\end{eqnarray}
 It follows from Lemma \ref{lem4} that
\begin{eqnarray}\label{eq:4.13}
J ( u^\epsilon (\cdot) ) - J ( \bar u (\cdot) )
= \beta^\epsilon + \epsilon {\mathbb E}^
{\bar u} \bigg [ \int_0^T \left < {\cal H}_u(t, \bar x(t),
\bar y(t), \bar z_1(t),\bar z_2(t), \bar\Lambda(t,\cdot),  \bar u(t)), u (t) - \bar u (t) \right > d t \bigg ] .
\end{eqnarray}
In view of  Assumptions \ref{ass:1.1} and
\ref{ass:1.2}, by using the Taylor Expansions, Lemma \ref{lem:3.5}, and the dominated
convergence theorem, it follows that
\begin{eqnarray}\label{eq:4.121}
\beta^\epsilon = o(\epsilon) .
\end{eqnarray}
Inserting \eqref{eq:4.121} into \eqref{eq:4.13}, we obtain that
\begin{eqnarray*}
\lim_{\epsilon \rightarrow 0^+} \frac{J (u^\epsilon (\cdot) ) - J (\bar u (\cdot))}{\epsilon}
= {\mathbb E}^{\bar u} \bigg [ \int_0^T \left < {\cal H}_u (t, \bar x(t-),
\bar y(t-), \bar z_1(t),\bar z_2(t), \bar\Lambda(t,\cdot),  \bar u(t)),
u (t) - \bar u (t) \right > d t \bigg ] .
\end{eqnarray*}
This completes the proof.
\end{proof}

\section{Necessary and Sufficient  Stochastic Maximum Principles }
  The purpose of this section is 
  to derive the necessary and sufficient maximum principles
   for Problem \ref{pro:1.1} or \ref{pro:1.2} in a unified way 
   by the results established in 
   Section 2.

\begin{theorem}[{\bf Necessary Stochastic Maximum principle}]
 Let Assumptions \ref{ass:1.1}
  and \ref{ass:1.2} hold. Suppose $( \bar u (\cdot); \bar x (\cdot), \bar y(\cdot), \bar z_1(\cdot),
  \\\bar z_2(\cdot),\bar\rho(\cdot) )$ be
an optimal pair of  Problem \ref{pro:1.2}
with the corresponding 
adjoint process $(\bar p(\cdot),\bar q_1(\cdot), \bar { q}_2(\cdot),\bar { q}_3(\cdot,\cdot),
\bar k(\cdot), \bar r(\cdot),\bar R_1(\cdot),\bar R_2(\cdot ),\\\bar { R}_3(\cdot,\cdot) )$. Using the short-hand notation, then the
following local maximum condition 
holds:
\begin{eqnarray}\label{eq:4.15}
\left <  {\mathbb E}^{\bar u} [ {\cal H}_{u} (t, \bar x(t-),
\bar y(t-), \bar z_1(t),\bar z_2(t), \bar\Lambda(t,\cdot),  \bar u(t))  | {\mathscr F}_t ^Y ],
v - \bar u (t) \right > \geq 0 , \quad \forall v \in U ,\ a.e.\ a.s..
\end{eqnarray}
\end{theorem}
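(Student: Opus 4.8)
The plan is to combine the variational formula of Theorem~\ref{them:3.1} with a convex perturbation argument and then to localize the resulting inequality by conditioning on the observation filtration $\{\mathscr F_t^Y\}_{t\in{\cal T}}$.

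First I would derive a global variational inequality. Because $U$ is convex, for every $u(\cdot)\in{\cal A}$ and every $\epsilon\in[0,1]$ the control $u^\epsilon(\cdot):=(1-\epsilon)\bar u(\cdot)+\epsilon u(\cdot)$ is again $\mathscr P^Y$-measurable, $U$-valued, and $L^4$-bounded (indeed $|u^\epsilon(t)|\le|\bar u(t)|+|u(t)|$), so $u^\epsilon(\cdot)\in{\cal A}$. Optimality of $\bar u(\cdot)$ gives $J(u^\epsilon(\cdot))\ge J(\bar u(\cdot))$, hence $\epsilon^{-1}(J(u^\epsilon(\cdot))-J(\bar u(\cdot)))\ge 0$; letting $\epsilon\downarrow 0$ and invoking Theorem~\ref{them:3.1} yields
\begin{equation}\label{eq:necc-1}
{\mathbb E}^{\bar u}\bigg[\int_0^T\big\langle{\cal H}_u(t,\bar x(t),\bar y(t),\bar z_1(t),\bar z_2(t),\bar\Lambda(t,\cdot),\bar u(t)),\ u(t)-\bar u(t)\big\rangle\,dt\bigg]\ge 0,\qquad\forall\,u(\cdot)\in{\cal A}.
\end{equation}

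Next I would pass to the observation filtration. Writing ${\cal H}_u(t)$ for the argument displayed in \eqref{eq:necc-1}, Assumptions~\ref{ass:1.1}--\ref{ass:1.2}, the estimates of Lemmas~\ref{lem:3.3}--\ref{lem:3.4} and H\"older's inequality guarantee that ${\cal H}_u(\cdot)$ and the integrand $\langle{\cal H}_u(\cdot),u(\cdot)-\bar u(\cdot)\rangle$ are integrable with respect to $dt\times d{\mathbb P}^{\bar u}$, so Fubini's theorem and the predictable-projection (tower) property may be used. Let $\widehat{{\cal H}_u}(\cdot)$ be the $\mathscr P^Y$-predictable projection of ${\cal H}_u(\cdot)$ under ${\mathbb P}^{\bar u}$; since the $Y$-filtration is Brownian, $\mathscr F_{t-}^Y=\mathscr F_t^Y$ and hence $\widehat{{\cal H}_u}(t)={\mathbb E}^{\bar u}[\,{\cal H}_u(t)\mid\mathscr F_t^Y\,]$ for a.e. $t$. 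Because $u(t),\bar u(t)$ are $\mathscr F_t^Y$-measurable, \eqref{eq:necc-1} turns into
\begin{equation}\label{eq:necc-2}
{\mathbb E}^{\bar u}\bigg[\int_0^T\big\langle\widehat{{\cal H}_u}(t),\ u(t)-\bar u(t)\big\rangle\,dt\bigg]\ge 0,\qquad\forall\,u(\cdot)\in{\cal A}.
\end{equation}

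Finally I would localize. Fix $v\in U$ and an arbitrary set $A\in\mathscr P^Y$, and put $u^A(t):=\bar u(t)+\mathbf{1}_A(t,\omega)\,(v-\bar u(t))$; this is $U$-valued by convexity, $\mathscr P^Y$-measurable, and satisfies $|u^A(t)|\le|\bar u(t)|+|v|$, so $u^A(\cdot)\in{\cal A}$. Substituting $u^A$ into \eqref{eq:necc-2} gives ${\mathbb E}^{\bar u}\big[\int_0^T\mathbf{1}_A(t,\omega)\,\langle\widehat{{\cal H}_u}(t),v-\bar u(t)\rangle\,dt\big]\ge 0$ for every $A\in\mathscr P^Y$. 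Taking for $A$ the set on which $\langle\widehat{{\cal H}_u}(t),v-\bar u(t)\rangle<0$, which belongs to $\mathscr P^Y$ since $\widehat{{\cal H}_u}$ and $\bar u$ are $\mathscr P^Y$-measurable, forces $(dt\times d{\mathbb P}^{\bar u})(A)=0$, i.e. $\langle\widehat{{\cal H}_u}(t),v-\bar u(t)\rangle\ge 0$ for $dt\times d{\mathbb P}^{\bar u}$-a.e. $(t,\omega)$; as ${\mathbb P}^{\bar u}\sim{\mathbb P}$, the same holds $dt\times d{\mathbb P}$-a.e. Letting $v$ run over a countable dense subset of $U$ and using continuity of $v\mapsto\langle\widehat{{\cal H}_u}(t),v-\bar u(t)\rangle$, I would obtain the inequality for all $v\in U$ simultaneously, which is precisely \eqref{eq:4.15} (recall $\bar x(t-)=\bar x(t)$ for a.e. $t$ as $\bar x$ is c\`{a}dl\`{a}g). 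The step I expect to be the main obstacle is the middle one: one must make sure that ${\cal H}_u(\cdot)$ admits a bona fide $\mathscr P^Y$-predictable projection to which Fubini and the tower property apply, and, in the localization, that the sign set $A$ is genuinely $\mathscr P^Y$-measurable so that the competing control $u^A$ is admissible; with \eqref{eq:necc-2} in hand the rest is routine.
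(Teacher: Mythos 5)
Your proposal is correct, and its skeleton (convex perturbation, the variational formula of Theorem~\ref{them:3.1}, optimality, then conditioning on $\mathscr F^Y_t$ using the $\mathscr P^Y$-measurability of controls) is the same as the paper's; the difference is in how the conditional expectation and the localization are handled. The paper first rewrites the variational inequality as a $\mathbb P$-expectation weighted by the density $\bar\rho(t)$, conditions under $\mathbb P$ to get $\big\langle \mathbb E[\bar\rho(t)\,\bar{\cal H}_u(t)\mid\mathscr F^Y_t],\,v-\bar u(t)\big\rangle\ge 0$ directly from the ``arbitrariness of $u$'' (a step it does not detail), and only then uses the positivity of $\bar\rho$ and Bayes' rule $\mathbb E^{\bar u}[\cdot\mid\mathscr F^Y_t]=\mathbb E[\bar\rho(t)\,\cdot\mid\mathscr F^Y_t]/\mathbb E[\bar\rho(t)\mid\mathscr F^Y_t]$ to reach \eqref{eq:4.15}; this route keeps the unnormalized-density (Kallianpur--Striebel type) form visible. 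You instead stay under $\mathbb P^{\bar u}$ throughout, apply Fubini and the tower property directly to obtain $\mathbb E^{\bar u}[{\cal H}_u(t)\mid\mathscr F^Y_t]$ (so Bayes' formula is never needed), and you make the localization explicit through the indicator perturbations $u^A(t)=\bar u(t)+\mathbf 1_A(v-\bar u(t))$ with $A\in\mathscr P^Y$ plus a countable dense subset of $U$ — precisely the argument the paper compresses into ``since $u(\cdot)$ is arbitrary.'' Your attention to the technical points (a $\mathscr P^Y$-measurable version of the conditional expectation via the predictable projection, continuity of the Brownian filtration so $\mathscr F^Y_{t-}=\mathscr F^Y_t$, admissibility of $u^\epsilon$ and $u^A$, equivalence $\mathbb P^{\bar u}\sim\mathbb P$, and $\bar x(t-)=\bar x(t)$ for a.e.\ $t$) is sound and in fact fills in details the paper leaves implicit; both routes buy the same conclusion, with the paper's giving the filtered form $\mathbb E[\bar\rho\,\bar{\cal H}_u\mid\mathscr F^Y]$ as an intermediate object and yours giving a cleaner, self-contained derivation under the reference measure $\mathbb P^{\bar u}$.
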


\begin{proof}
 For any admissible control 
 $u(\cdot)\in \cal A,$ in view of $\mathscr P^Y$-measurable
 property of  admissible controls,  Theorem \ref{them:3.1}, the optimality of $\bar u(\cdot)$ and the property of conditional
expectation, we obtain
\begin{eqnarray*}
&& {\mathbb E} \bigg [ \int_0^T
 \langle {\mathbb E}
  [{\bar \rho(t)}{ \bar {\cal H}}_u (t, \bar x(t-),
\bar y(t), \bar z_1(t),\bar z_2(t), \bar\Lambda(t,\cdot),  \bar u(t)) | {\mathscr F}_t ^Y] ,
u (t) - \bar u (t) \rangle  d t \bigg ] \\
&& = {\mathbb E} \bigg[\int_0^T \langle
{\bar \rho(t)}{ \bar  {\cal H}}_u (t, \bar x(t-),
\bar y(t-), \bar z_1(t),\bar z_2(t), \bar\Lambda(t,\cdot),  \bar u(t))
, u (t) - \bar u (t) \rangle \d t \bigg ]
\\
&& = {\mathbb E}^{\bar u} \bigg[\int_0^T \langle{ \bar {\cal H}}_u (t, \bar x(t-),
\bar y(t-), \bar z_1(t),\bar z_2(t), \bar\Lambda(t,\cdot),  \bar u(t))
, u (t) - \bar u (t) \rangle \d t \bigg ] \\
&& = \lim_{\epsilon \rightarrow 0^+} \frac{J( \bar u (\cdot) + \epsilon (
u (\cdot) - \bar u (\cdot) ) ) - J ( \bar u (\cdot) )}{\epsilon} \geq 0 .
\end{eqnarray*}
This implies that
\begin{eqnarray}
  \langle {\mathbb E}
  [{\bar \rho(t)}{ \bar {\cal H}}_u ( t, \bar x(t-),
\bar y(t-), \bar z_1(t),\bar z_2(t), \bar\Lambda(t,\cdot),  \bar u(t)) | {\mathscr F}_t ^Y] ,
v - \bar u (t) \rangle \geq 0, \quad \forall v \in U ,\ a.e.\ a.s.,
\end{eqnarray}
since $u(\cdot)\in \cal A$ is 
arbitrary admissible control. 
Therefore, since
$\bar \rho(t)>  0, $
by using Baye's rule for conditional expectations, we get that 
 \begin{eqnarray}
   \begin{split}
    & \left <  {\mathbb E}^{\bar u} [ {\cal H}_{u} (t, \bar x(t-),
\bar y(t-), \bar z_1(t),\bar z_2(t), \bar\Lambda(t,\cdot),  \bar u(t))  | {\mathscr F}_t ^Y ],
v - \bar u (t) \right >
\\&=
\frac{1}{\mathbb E[\bar\rho (t)|{\mathscr F}_t ^Y]}
\big\langle {\mathbb E}
  [{\bar \rho(t)}{ \bar {\cal H}}_u (t, \bar x(t-),
\bar y(t-), \bar z_1(t),\bar z_2(t), \bar\Lambda(t,\cdot),  \bar u(t)) | {\mathscr F}_t ^Y] ,
v - \bar u (t)\big \rangle
\\& \geq 0.
   \end{split}
 \end{eqnarray}
  The proof is complete.

\end{proof}

 In the following, we discuss  the sufficient maximum principle for  an optimal control of
Problem \ref{pro:1.2} in a special case when
the observation process is not
affected by the control process.
 More precisely,  in our observation equation \eqref{eq:2}, we assume that
$$h(t,x,u)=h(t)$$ is an
$\mathscr P^Y-$measurable bounded
process.
  Define a new probability measure $\mathbb Q$ on $(\Omega, \mathscr F)$ by
\begin{eqnarray}
  d\mathbb Q=\rho(1)d\mathbb P,
\end{eqnarray}
where
\begin{eqnarray} \label{eq:433}
  \left\{
\begin{aligned}
  d \rho(t)=&  \rho(t) h(s)dY(s)\\
  \rho(0)=&1.
\end{aligned}
\right.
\end{eqnarray}

\begin{theorem}{\bf [Sufficient Maximum Principle] } \label{thm:4.1}
 Assume that Assumptions \ref{ass:1.1}
  and \ref{ass:1.2} hold. Suppose that $(\bar u (\cdot);
  \bar x (\cdot),
  \bar y(\cdot), \bar z_1(\cdot),\\
  \bar z_2 (\cdot), \bar \Lambda(\cdot))$ be an admissible pair with $\phi(x)=\phi x,$
  where $\phi$ is $\mathscr F_T-$measurable bounded  random variable.
Moreover, we assume that  
\begin{enumerate}
\item[(i)] the Hamiltonian ${\cal H}$ is convex in $(x,y,z_1,z_2, \Lambda(\cdot),u) \in \mathbb {R}^n\times \mathbb  R^m\times \mathbb R^m\times \mathbb R^m \times {M}^{\nu,2}( E; \mathbb R^m)\times
U$,  and $\Phi$ and $\gamma$ are convex in $x$ and $y,$ respectively,
\item[(ii)]
\begin{eqnarray*}\label{eq:5.119}
&& \mathbb E\bigg[{\cal H} ( t,
\bar x (t-),
\bar y (t-), \bar z_1 (t),
\bar z_2 (t),\bar\Lambda(t,\cdot),\bar u(t)) |\mathscr F^Y_t\bigg]\nonumber \\
&& = \min_{u \in U }
 \mathbb E\bigg[{\cal H} ( t,
\bar x (t-),
\bar y (t-), \bar z_1 (t),\bar z_2 (t),\bar \Lambda(t,\cdot), u)
 |\mathscr F^Y_t\bigg], \quad \mbox {a.e.\ a.s.} ,
\end{eqnarray*}
\end{enumerate}
then $(\bar u (\cdot),  \bar x (\cdot),
\bar y(\cdot), \bar z_1(\cdot), \bar z_2 (\cdot),\bar \Lambda(\cdot,\cdot))$ is an optimal pair of Problem \ref{pro:1.2}.
\end{theorem}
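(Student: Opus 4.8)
The plan is to show that $J(u(\cdot)) - J(\bar u(\cdot)) \ge 0$ for every admissible control $u(\cdot) \in \cal A$, which is precisely the optimality of $\bar u(\cdot)$. The starting point is the exact difference formula from Lemma \ref{lem4}, equation \eqref{eq:21}, which expresses $J(u(\cdot)) - J(\bar u(\cdot))$ as an $\mathbb E^{\bar u}$-integral of Hamiltonian increments minus their first-order (linear) parts, plus the three terminal/initial convexity-type increments for $\Phi$, $\phi$ and $\gamma$, plus three ``cross terms'' involving $\rho^u - \bar\rho$. First I would dispose of the cross terms: under the special assumption $h(t,x,u) = h(t)$, the process $\rho^u(\cdot)$ no longer depends on the control, so $\rho^u(\cdot) \equiv \bar\rho(\cdot) \equiv \rho(\cdot)$. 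Hence all three terms in \eqref{eq:21} containing $\rho^u - \bar\rho$, as well as the $\bar R_2$-term, vanish identically, and $\mathbb E^{\bar u} = \mathbb Q$ throughout.

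Next I would handle the remaining terms by convexity. Since $\phi(x) = \phi x$ is linear, the increment $\phi^u(T) - \bar\phi(T) - \bar\phi_x^*(T)\bar k(T)$-type expression collapses: $\phi^u(T) - \bar\phi(T) = \phi(x^u(T) - \bar x(T))$ and $\bar\phi_x = \phi$, so the corresponding line in \eqref{eq:21} is exactly zero. For the $\Phi$ and $\gamma$ terms, convexity of $\Phi$ in $x$ and of $\gamma$ in $y$ gives $\Phi^u(T) - \bar\Phi(T) - \langle x^u(T) - \bar x(T), \bar\Phi_x(T)\rangle \ge 0$ and $\gamma^u(0) - \bar\gamma(0) - \langle y^u(0) - \bar y(0), \bar\gamma_y(0)\rangle \ge 0$, and these expectations are nonnegative. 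The main term is the time-integral of
$$
{\cal H}^u(t) - \bar{\cal H}(t) - \langle {\cal H}_x, \Delta x\rangle - \langle {\cal H}_y, \Delta y\rangle - \langle {\cal H}_{z_1}, \Delta z_1\rangle - \langle {\cal H}_{z_2}, \Delta z_2\rangle - \int_E \langle {\cal H}_\Lambda, \Delta\Lambda\rangle \nu(de),
$$
where $\Delta$ denotes the difference of the $u$-quantity and the $\bar u$-quantity and all derivatives of ${\cal H}$ are evaluated along the optimal trajectory. By convexity of ${\cal H}$ in $(x,y,z_1,z_2,\Lambda(\cdot),u)$, this expression is bounded below by $\langle {\cal H}_u(t,\bar x(t-),\dots,\bar u(t)), u(t) - \bar u(t)\rangle$. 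Therefore
$$
J(u(\cdot)) - J(\bar u(\cdot)) \ge \mathbb E^{\bar u}\bigg[\int_0^T \langle {\cal H}_u(t,\bar x(t-),\bar y(t-),\bar z_1(t),\bar z_2(t),\bar\Lambda(t,\cdot),\bar u(t)), u(t) - \bar u(t)\rangle\, dt\bigg].
$$

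To conclude I would take conditional expectation with respect to ${\mathscr F}^Y_t$ inside the integral: since $u(\cdot)$ and $\bar u(\cdot)$ are ${\mathscr P}^Y$-measurable, the Bayes-rule manipulation from the proof of the necessary principle gives
$$
\mathbb E^{\bar u}\big[\langle {\cal H}_u, u(t) - \bar u(t)\rangle\big] = \mathbb E^{\bar u}\big[\langle \mathbb E^{\bar u}[{\cal H}_u \mid {\mathscr F}^Y_t], u(t) - \bar u(t)\rangle\big],
$$
and assumption (ii) — the pointwise ${\mathscr F}^Y_t$-conditional minimality of the Hamiltonian at $\bar u(t)$ — together with convexity of $u \mapsto {\cal H}$ forces $\langle \mathbb E^{\bar u}[{\cal H}_u \mid {\mathscr F}^Y_t], v - \bar u(t)\rangle \ge 0$ for all $v \in U$, in particular for $v = u(t)$. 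Hence the integrand is nonnegative a.e., a.s., so $J(u(\cdot)) - J(\bar u(\cdot)) \ge 0$, and $\bar u(\cdot)$ is optimal. The main obstacle I anticipate is the careful passage from the conditional minimality of the Hamiltonian itself (assumption (ii)) to the sign of the inner product with ${\cal H}_u$; this requires that the conditional expectation commutes appropriately with differentiation in $u$ and that the convexity in $u$ is used to convert the global minimality into the variational inequality $\langle \mathbb E^{\bar u}[{\cal H}_u\mid{\mathscr F}^Y_t], v-\bar u(t)\rangle\ge 0$ — the standard argument being that if a convex function attains its minimum over a convex set at an interior-direction-admissible point, its directional derivatives into the set are nonnegative. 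The integrability bound $\mathbb E\int_0^T |u(t)|^4\,dt < \infty$ and the estimates in Lemmas \ref{lem:3.3}–\ref{lem:3.4} are what guarantee every expectation above is finite so the manipulations are legitimate.
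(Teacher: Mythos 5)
Your proposal is correct and follows essentially the same route as the paper: the difference formula of Lemma \ref{lem4} simplified by $h(t,x,u)=h(t)$ and the linearity of $\phi$, the convexity inequalities for ${\cal H}$, $\Phi$, $\gamma$, and the convex-optimization/conditioning argument (using that $\rho(\cdot)$ is positive and $\mathscr F^Y$-adapted, so the $\mathbb P$- and $\mathbb P^{\bar u}$-conditional expectations given $\mathscr F^Y_t$ agree) to turn condition (ii) into $\langle \mathbb E[{\cal H}_u\mid\mathscr F^Y_t],u(t)-\bar u(t)\rangle\ge 0$. The only cosmetic difference is that you phrase the last step under $\mathbb Q$ via Bayes' rule while the paper multiplies by $\rho(t)$ and works under $\mathbb P$; these are the same computation.
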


\begin{proof}
Given  an arbitrary admissible pair$({u}(\cdot);x^u(\cdot),
 y^u(\cdot),
 z^u_1(\cdot),  z^u_2(\cdot),
\Lambda^u(\cdot,\cdot)),$  from Lemma \ref{lem4},
we can get the following formula for the difference $J ( u (\cdot) ) - J ( \bar u (\cdot) )$:
\begin{eqnarray}\label{eq:40}
&&J (u (\cdot)) - J (\bar u(\cdot))\nonumber
\\ &=& {\mathbb E} ^Q
\bigg[\int_0^T \bigg [ { \cal H}(t,x^u(\cdot),
 y^u(\cdot),
 z^u_1(\cdot),  z^u_2(\cdot),
\Lambda^u(\cdot,\cdot), u(t)) - {\cal H} (t, \bar x(t),
\bar y(t), \bar z_1(t),\bar z_2(t), \bar\Lambda(t,\cdot),  \bar u(t)) \nonumber \\&& - \big < {\cal H}_x (t, \bar x(t),
\bar y(t), \bar z_1(t),\bar z_2(t), \bar\Lambda(t,\cdot),  \bar u(t)),x^u (t) - \bar
x (t)
\big>- \big <{ \cal H}_y (t, \bar x(t),
\bar y(t), \bar z_1(t),\bar z_2(t), \bar\Lambda(t,\cdot),  \bar u(t)),
y^u (t) - \bar
y (t)\big>\nonumber
\\&&- \big <{\cal H}_{z_1} (t, \bar x(t),
\bar y(t), \bar z_1(t),\bar z_2(t), \bar\Lambda(t,\cdot),  \bar u(t)),
 z^u_1 (t) - \bar
z _1(t)\big>- \big <{\cal H}_{z_2} (t, \bar x(t),
\bar y(t), \bar z_1(t),\bar z_2(t), \bar\Lambda(t,\cdot),  \bar u(t)),
 z^u_2 (t) - \bar
z _2(t)\big>\nonumber
\\&&
 -\int_{E}\big <{\cal H}_{\Lambda} (t, \bar x(t),
\bar y(t), \bar z_1(t),\bar z_2(t), \bar\Lambda(t,\cdot),  \bar u(t)),
\Lambda (t,e) - \bar\Lambda(t,e)\big>\nu(de)\bigg]dt\bigg]
\nonumber \\
&& + {\mathbb E}^{Q} \big [ \Phi(T)
 - \bar \Phi^u (T) - \left <
x^u (T) - \bar x (T), \bar \Phi_x (T)  \right
> \big ]
\nonumber \\
&& + {\mathbb E}
 \big [ \gamma^u(0) - \bar \gamma (0) - \left <
y^u (0)
- \bar y (0), \bar \gamma_y (0)  \right
> \big ].
\end{eqnarray}

By the convexity of ${\cal H}$, $\Phi$
and $\gamma$ (i.e. Conditions (i)), we have
\begin{eqnarray}\label{eq:41}
 &&{ \cal H}(t, x^u(t),
y^u(t), z^u_1(t), z^u_2(t), \Lambda^u(t,\cdot),  u(t)) - {\cal H} (t, \bar x(t),
\bar y(t), \bar z_1(t),\bar z_2(t), \bar\Lambda(t,\cdot),  \bar u(t))\nonumber
\\ &\geq&  \big < {\cal H}_x (t, \bar x(t),
\bar y(t), \bar z_1(t), \bar z_2(t), \bar\Lambda(t,\cdot),  \bar u(t)),x^u (t) - \bar
x (t)
\big>\nonumber
+ \big <{ \cal H}_y (t, \bar x(t),
\bar y(t), \bar z_1(t), \bar z_2(t), \bar\Lambda(t,\cdot),  \bar u(t)),
y^u (t) - \bar
y (t)\big>\nonumber
\\&&+ \big <{\cal H}_{z_1} (t, \bar x(t),
\bar y(t), \bar z_1(t),\bar z_2(t), \bar\Lambda(t,\cdot),  \bar u(t)),
 z^u_1 (t) - \bar
z_1 (t)\big>\nonumber
+ \big <{\cal H}_{z_2} (t, \bar x(t),
\bar y(t), \bar z_1(t),\bar z_2(t), \bar\Lambda(t,\cdot),  \bar u(t)),
 z^u_2 (t) - \bar
z_2 (t)\big>
\\&&+ \int_{E}\big <{\cal H}_{\Lambda} (t, \bar x(t),
\bar y(t), \bar z_1(t),\bar z_2(t), \bar\Lambda(t,\cdot),  \bar u(t)),
\Lambda (t,e) - \bar\Lambda(t,e)\big>\nu(de)\nonumber
\\&&+ \big <{\cal H}_{u} (t, \bar x(t),
\bar y(t), \bar z_1(t),\bar z_2(t), \bar\Lambda(t,\cdot),  \bar u(t)),u (t) - \bar
u(t)\big>,
\end{eqnarray}

\begin{eqnarray}\label{eq:42}
 \Phi^u(T) - \bar \Phi (T) \geq \left < X^u (T) - \bar X (T), \Phi_x (T) \right >
\end{eqnarray}
and
\begin{eqnarray}\label{eq:43}
 \gamma^u(0) - \bar \gamma (0) \geq \left < y^u (0) - \bar y (0), \gamma_y (0) \right >.
\end{eqnarray}

Furthermore, in view of   the convex optimization principle (see Proposition 2.21 of \cite{ET1976}) and  the optimality condition (ii), we have
\begin{eqnarray}\label{eq:5.5}
\left < u (t) - \bar u (t), \mathbb E\bigg[{\cal H}_{u} ( t,
\bar x (t),
\bar y (t), \bar z_1 (t),
\bar z_2(t),\bar\Lambda(t,\cdot),\bar u(t)) |\mathscr F^Y_t\bigg] \right >
\geq 0 .
\end{eqnarray}
Since $h(t)$ is an
$\mathscr P^Y-$measurable bounded
process, from \eqref{eq:433}, we know 
that  $\rho(\cdot)$ is a positive 
$\mathscr F_t-$ adopted 
process which implies that 

\begin{eqnarray}\label{eq:45}
\begin{split}
&\mathbb E^Q\bigg[\int_0^T\left < u (t) - \bar u (t), {\cal H}_{u} ( t,
\bar x (t),
\bar y (t), \bar z_1(t),\bar z_2(\cdot),
\bar\Lambda(t,\cdot),\bar u(t)) \right >
\bigg]\\&
=\mathbb E\bigg[\int_0^T\rho(t)\left < u (t) - \bar u (t), {\cal H}_{u} ( t,
\bar x (t),
\bar y (t), \bar z_1(t),\bar z_2(\cdot),
\bar\Lambda(t,\cdot),\bar u(t)) \right >
\bigg]
\\&
=\mathbb E\bigg[\int_0^T\rho(t)\left < u (t) - \bar u (t), \mathbb E[{\cal H}_{u} ( t,
\bar x (t),
\bar y (t), \bar z_1(t),\bar z_2(\cdot),
\bar\Lambda(t,\cdot),\bar u(t))|\mathscr F_{t}^Y] \right >
\bigg]
\\&\geq 0.
\end{split}
\end{eqnarray}
Inserting \eqref{eq:41},\eqref{eq:42},\eqref{eq:43} and  \eqref{eq:45} into \eqref{eq:40},
we obtain
\begin{eqnarray}
J (u (\cdot)) - J (\bar u (\cdot)) \geq 0.
\end{eqnarray}
 Because of the arbitrariness of $u (\cdot)$, we get 
  the optimality of $\bar u (\cdot)$
and thus $(\bar u (\cdot), \bar x (\cdot), \bar y(\cdot), \bar z_1(\cdot), z_2(\cdot), \bar \Lambda(\cdot,\cdot))$ is an optimal pair. The proof is completed.
\end{proof}
\section{Conclusion}
In this paper, partial observable optimal control problem for forward-backward 
stochastic system 
driven by Brownian motion and Poisson 
random Martingale measure has been discussed. Necessary and sufficient conditions, in the form of Pontryagin maximum principle, for partial information optimal control are obtained in a unified way. In the future, we will study partial observable differential games problem for optimal control problem  forward-backward
stochastic system of jump diffusion.

\bibliographystyle{model1a-num-names}

\end{document}